\newtheorem{prop}{Proposition}
\newtheorem{theorem}[prop]{Theorem}
\newtheorem{lemma}[prop]{Lemma}
\newtheorem*{remark}{Remark}
\newtheorem{corollary}[prop]{Corollary}
\newcommand{\Mod}{\,\mathrm{mod}\,}
\newcommand{\e}{\mathrm{e}}
\newcommand{\de}{\,\mathrm{d}}
\newcommand{\ii}{\mathrm{i}}
\newcommand{\A}{\mathcal{A}}
\newcommand{\B}{\mathcal{B}}
\newcommand{\Z}{\mathds{Z}}
\newcommand{\R}{\mathds{R}}
\newcommand{\Ind}{\mathbb{1}}
\newcommand{\eps}{\varepsilon}
\title{Exponential sums over primes are unbounded}
\author{P\'ea (Pierre-Alexandre) Bazin \orcidlink{0009-0000-7184-9130} \footnote{Universit\'e Paris Cit\'e, Sorbonne Universit\'e, CNRS \\ Institut de Math\'ematiques de Jussieu-Paris Rive Gauche, 75013 Paris, France. \\ Email: bazin@imj-prg.fr}}
\date{}
\begin{document}
\maketitle

\begin{abstract}
    We prove prime exponential sums have no better than square root cancellation on average on short intervals, in the sense that $$\frac{1}{x} \sum_{-y< n\le x} \left|\sum_{\substack{n< m \le n+y\\ 1\le m \le x}} \Lambda(m) \e(\alpha m)\right|^2 \gg y\log y$$ whenever $y \ll x^{1/3-\eps}.$ This answers a question of Aymone and Ramar\'e by proving the lower bound $$\sup_{n\le x} \left|\sum_{m\le n} \Lambda(m) \e(\alpha m)\right| \gg x^{1/6 - \eps}.$$ 
\end{abstract}

\section{Introduction}
\subsection{Main results}

The main purpose of this paper is to answer the following question of Aymone, which was communicated to the author by Ramar\'e at the July 2024 Italy-France Analytic Number Theory Workshop in Genoa: is the prime exponential sum $$\pi(x; \alpha) := \sum_{p\le x} \e(\alpha p)$$ (where we use the standard notation $\e(\beta) := \e^{2\ii\pi\beta}$ and the sum is on primes $p$) unbounded for all $\alpha$ ?

We answer positively to the question and obtain a quantitative lower bound (Corollary \ref{main-cor} below). To get this bound, we develop a new method (see Theorem \ref{initial-prop} below) to obtain lower bounds on exponential sums of the form $\sum_n f(n) \e(\alpha n).$ In this paper we apply it to $f= \Lambda$ (Theorem \ref{main-uniform}) as well as the divisor function $f = \tau,$ where our method gives nearly optimal results (Theorem \ref{uniform-tau}). In a paper with Ihor Pylaiev and Fred Tyrrell \cite{tyrrell}, we apply it to get qualitative results for general multiplicative $f,$ whereas in \cite{pi_k}, we develop results to apply it for $f(n) = \Ind_{\Omega(n) = k}.$ We first present our results for $f= \Lambda$ and $f = \tau$ before stating our general results.

\subsection{The case of primes}

We study here the exponential sum $\psi(x;\alpha) := \sum_{n\le x} \Lambda(n) \e(\alpha n),$ which is closely related to $\pi(x; \alpha).$
The study of this exponential sum dates back to Vinogradov \cite[§9]{vinogradov}, who proved the first non-trivial \textit{upper bound} on $\psi(x;\alpha).$  
\begin{theorem}[Vinogradov \cite{vinogradov}, §9, Theorem 3]\label{vino}
    Define for $\alpha \in \R/\Z$ and $x\ge 2,$ \begin{equation}\label{def-R}
        R(x,\alpha) := \min\left\{R > 0 : \exists\, 1\le q\le R, (a,q) = 1, \left|\alpha - \frac{a}{q}\right| \le \frac{R}{qx}\right\}.
    \end{equation} Then when $\eps > 0$ is fixed, we have uniformly in $\alpha \in \R/\Z$ and $x \ge 2,$ $$\psi(x;\alpha) \ll \frac{x^{1+\eps}}{R(x,\alpha)^{1/2}} + x^{4/5+\eps}.$$
\end{theorem}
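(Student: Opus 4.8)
The plan is to prove first an estimate in terms of an \emph{explicit} rational approximation to $\alpha$, and then to convert it into the stated bound in terms of $R(x,\alpha)$. Concretely, I would establish that whenever $(a,q)=1$ and $|\alpha-a/q|\le 1/q^{2}$ one has
$$\psi(x;\alpha)\ll\left(\frac{x}{q^{1/2}}+x^{4/5}+x^{1/2}q^{1/2}\right)(\log x)^{4},$$
and then select $a/q$ judiciously. The arithmetic input is a combinatorial identity for $\Lambda$: applying Vaughan's identity with parameters $U=V=x^{2/5}$ one writes $\psi(x;\alpha)$, up to an error of size $O(x^{2/5})$, as a bounded number of \emph{Type I} sums $\sum_{d\le x^{4/5}}c_d\sum_{m\le x/d}\e(\alpha dm)$ with $|c_d|\ll\tau(d)\log x$ and \emph{Type II} sums $\sum_{k>x^{2/5}}\sum_{m}a_k b_m\e(\alpha km)$ with $a_k,b_m\ll\tau(\cdot)\log x$, $km\le x$ and $m>x^{2/5}$. (Vinogradov's original argument used instead a more elaborate sieve-type decomposition, but the effect is the same.)

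The Type I sums are handled by the classical lemma that $\sum_{d\le D}\min(x/d,\|\alpha d\|^{-1})\ll(x/q+D+q)\log(2qx)$ when $|\alpha-a/q|\le 1/q^{2}$; with $D=x^{4/5}$ this contributes $\ll(x/q+x^{4/5}+q)x^{\eps}$, and in the regime where $q$ is very small one instead opens $\e(\alpha dm)=\e((a/q)dm)\e((\alpha-a/q)dm)$ and splits $m$ into residues modulo $q$ to gain the same way. The Type II sums are where genuine cancellation — the square-root saving — is produced: after a Cauchy–Schwarz in the shorter variable and expanding the square over the longer one, one is left with $\sum_{|h|<K}(K-|h|)\min(M,\|\alpha h\|^{-1})$, which the same $\min$-sum lemma bounds by $\ll(MK/q+M+K+q)K$ with $MK\le x$ and $M,K\le x^{3/5}$; inserting these and taking square roots gives the contribution $\ll(xq^{-1/2}+x^{4/5}+x^{1/2}q^{1/2})x^{\eps}$, the exponent $4/5$ arising precisely from the balance $U^{2}\asymp xU^{-1/2}$ that fixed $U=x^{2/5}$. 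Collecting the pieces yields the displayed $q$-estimate.

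To pass to $R:=R(x,\alpha)$, the first observation is that Dirichlet's theorem with denominator bound $\lfloor x^{1/2}\rfloor$ already forces $R\le x^{1/2}$ for every $\alpha$, so $x^{1/2}q^{1/2}\le x^{1/2}R^{1/2}\le x^{3/4}$ is always dominated by the target term $x^{4/5+\eps}$. It then suffices to exhibit $a/q$ with $q\gtrsim R$ and $|\alpha-a/q|\le 1/q^{2}$, for then $xq^{-1/2}\ll x^{1+\eps}R^{-1/2}$. Take the witness $q_{0}\le R$ with $\|\alpha q_{0}\|\le R/x$ from the definition of $R$: if $q_{0}\ge R^{1-\eps}$ we are done directly; otherwise minimality of $R$ forces $\|\alpha q_{0}\|=R/x$ exactly, and then the next continued-fraction convergent of $\alpha$ after $a_{0}/q_{0}$ has denominator $q_{1}\asymp x/R$ with $|\alpha-a_{1}/q_{1}|\le q_{1}^{-2}$, and feeding $q_{1}$ into the $q$-estimate gives $xq_{1}^{-1/2}\asymp(xR)^{1/2}\le x^{1+\eps}R^{-1/2}$ and $x^{1/2}q_{1}^{1/2}\asymp xR^{-1/2}$, as required; the $(\log x)^{4}$ is absorbed into $x^{\eps}$ throughout.

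I expect the main obstacle to be the Type II bilinear estimate — it is the only step that exploits cancellation rather than the triangle inequality, and coaxing the ranges of summation, the Cauchy–Schwarz, and the off-diagonal $\min$-sum into a clean power of $x$ is where the real work lies; the conversion to $R(x,\alpha)$ is then a short but slightly fiddly exercise with Dirichlet's theorem and continued fractions, whose only genuine subtlety is the a priori bound $R\le x^{1/2}$ that rules out the otherwise delicate regime of very large $R$.
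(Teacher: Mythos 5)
The paper does not prove this statement at all: Theorem \ref{vino} is quoted as a black box from Vinogradov (\S 9, Theorem 3 of \cite{vinogradov}), so there is no internal proof to compare against. Your sketch is the standard modern route to that bound --- Vaughan's identity with $U=V=x^{2/5}$, Type I sums via $\sum_{d\le D}\min(x/d,\|\alpha d\|^{-1})\ll(x/q+D+q)\log(2qx)$, Type II sums via Cauchy--Schwarz and the same $\min$-sum lemma, yielding $\psi(x;\alpha)\ll(xq^{-1/2}+x^{4/5}+x^{1/2}q^{1/2})(\log x)^4$ for $|\alpha-a/q|\le 1/q^2$ --- and it is sound; as you note, Vinogradov's original argument replaces Vaughan's identity by a sieve decomposition but produces the same Type I/Type II structure. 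Two small points. First, in the Type II display the roles of $K$ and $M$ are swapped: Cauchy--Schwarz in the variable of length $K$ and expansion over the variable of length $M$ leaves $\sum_{|h|<M}(M-|h|)\min(K,\|\alpha h\|^{-1})$, not $\sum_{|h|<K}(K-|h|)\min(M,\|\alpha h\|^{-1})$; this is only a labelling slip and does not affect the final exponents. Second, your conversion to $R=R(x,\alpha)$ is the genuinely non-textbook part and is handled correctly: the a priori bound $R\le x^{1/2}$ from Dirichlet kills the $x^{1/2}q^{1/2}$ term, and in the degenerate case where the witness denominator $q_0$ is much smaller than $R$, minimality forces $|\alpha-a_0/q_0|$ to equal $R/(q_0x)$ (up to the attainment of the minimum, which holds since only finitely many fractions are relevant), whence $R/(q_0x)<1/(2q_0^2)$ makes $a_0/q_0$ a convergent by Legendre's criterion and the next convergent has denominator $q_1\asymp x/R$, giving $xq_1^{-1/2}\asymp(xR)^{1/2}\le xR^{-1/2}$ and $x^{1/2}q_1^{1/2}\asymp xR^{-1/2}$ as required. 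So the proposal is a correct reconstruction of the cited result rather than an alternative to anything in the paper.
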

In a forthcoming paper (announced at the 2026 Probability in Number Theory conference in Montreal), Maynard improved the $x^{4/5+\eps}$ term to $x^{19/24+\eps}.$
\begin{remark}
    By Dirichlet's theorem, we always have $R(x,\alpha) \le x^{1/2}$ and, for algebraic irrational $\alpha,$ we have $R(x,\alpha) \gg x^{1/2-\eps}.$
    When $R(x,\alpha)$ is very small, we say $\alpha$ is in a major arc and the study of $\psi(x;\alpha)$ reduces to the study of primes in arithmetic progressions mod $q$ (where $q$ comes from (\ref{def-R})). When $R(x,\alpha)$ is close to its typical value $\asymp x^{1/2}$, Vinogradov's theorem gives an upper bound $\psi(x; \alpha) \ll x^{4/5+\eps}$ (and Maynard's improvement gives $\psi(x;\alpha) \ll x^{19/24 + \eps}$). 
\end{remark}

However, much less is known regarding \textit{lower bounds} for $\psi(x; \alpha)$ when $R(x;\alpha)$ is large. The most significant lower bound result is that of Vaughan \cite{vaughan}, who showed the $L^1$ average lower bound $$\int_0^1 \big|\psi(x; \alpha)\big| \de\alpha \gg x^{1/2}$$ (note that we trivially have the $L^2$ average $\int_0^1 \big|\psi(x; \alpha)\big|^2 \de\alpha \asymp x\log x$ developing the square and using the prime number theorem). However, no non-trivial lower bound is known for fixed $\alpha.$ 


Our main result proves the prime exponential sum has no better than square root cancellation on short intervals.

\begin{theorem}\label{main-uniform}
    Let $\eps >0.$ Then for $y \le x^{1/3 - \eps},$ we have $$\inf_{\alpha\in\R} \frac{1}{x} \sum_{-y < n\le x} \left|\sum_{\substack{n< m\le n+y \\ 1 \le m \le x}} \Lambda(m) \e(\alpha m)\right|^2 \gg y\log x,$$ where the implied constant depends only on $\eps.$
\end{theorem}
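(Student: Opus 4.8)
The plan is to pass to the Fourier side, localise, and extract the main term from short major arcs near $\alpha$. Replacing $y$ by $\lfloor y\rfloor$ changes neither side by more than a constant factor, so we may assume $y\in\N$. Put $K(\theta):=\sum_{0\le j<y}\e(j\theta)$ and $\psi(x;\beta):=\sum_{m\le x}\Lambda(m)\e(\beta m)$, and set $a_m:=\Lambda(m)\e(\alpha m)\Ind_{1\le m\le x}$. Since the inner sum equals $\sum_{j=1}^{y}a_{n+j}$ for every $n\in\Z$, Parseval on $\Z$ gives the \emph{exact} identity
\[
\sum_{-y<n\le x}\Bigl|\sum_{\substack{n<m\le n+y\\ 1\le m\le x}}\Lambda(m)\e(\alpha m)\Bigr|^2 \;=\; \int_0^1|\psi(x;\beta)|^2\,|K(\alpha-\beta)|^2\,\de\beta ,
\]
with no boundary terms, as the truncation $1\le m\le x$ is already encoded in $a_m$. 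Expanding $|K|^2=\sum_{|h|<y}(y-|h|)\e(h\,\cdot\,)$ shows the right side equals $\sum_{|h|<y}(y-|h|)\e(\alpha h)\sum_{1\le m,\,m+h\le x}\Lambda(m)\Lambda(m+h)$, whose diagonal term $y\sum_{m\le x}\Lambda(m)^2\sim yx\log x$ is the expected size; rather than bound the off-diagonal, I will obtain $\int_0^1|\psi(x;\beta)|^2|K(\alpha-\beta)|^2\,\de\beta\gg_\eps yx\log x$ by positivity.

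Because the integrand is non-negative and $|K(\theta)|^2\gg y^2$ for $\|\theta\|\le\tfrac1{4y}$, for any set $\mathcal{Q}$ of squarefree moduli one has
\[
\int_0^1|\psi(x;\beta)|^2|K(\alpha-\beta)|^2\,\de\beta \;\gg\; y^2\sum_{q\in\mathcal{Q}}\ \sum_{\substack{(a,q)=1\\ \|a/q-\alpha\|\le 1/(8y)}}\ \int_{|\eta|\le 1/(10x)}\Bigl|\psi\Bigl(x;\tfrac{a}{q}+\eta\Bigr)\Bigr|^2\de\eta ,
\]
the arcs being pairwise disjoint as long as $q\le x^{\eps/3}y$ and $y\le x^{1/2-\eps}$. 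On each arc write $\psi(x;\tfrac{a}{q}+\eta)=\tfrac{\mu(q)}{\phi(q)}T(\eta)+\Delta_{q,a}(\eta)$ with $T(\eta):=\sum_{m\le x}\e(m\eta)$; since $|T(\eta)|\gg x$ for $|\eta|\le\tfrac1{10x}$, the first piece gives $\int_{|\eta|\le 1/(10x)}\tfrac{|T(\eta)|^2}{\phi(q)^2}\de\eta\gg\tfrac{x}{\phi(q)^2}$. Taking $\mathcal{Q}$ to be the squarefree $q$ in $[x^{\eps/4}y,\,x^{\eps/3}y]$, at least $\gg\phi(q)/y$ of the $\phi(q)$ reduced fractions $a/q$ lie within $\tfrac1{8y}$ of $\alpha$ (counting residues coprime to $q$ in a short interval, the $2^{\omega(q)}$ discrepancy being harmless once $q\ge x^{\eps/4}y$), so the $T$-contribution alone is
\[
\gg\; y^2\sum_{q\in\mathcal{Q}}\frac{\phi(q)/y}{\phi(q)^2}\,x \;\asymp\; xy\sum_{x^{\eps/4}y\le q\le x^{\eps/3}y}\frac1{\phi(q)} \;\asymp_\eps\; xy\log x .
\]

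It remains to bound $\mathcal{E}:=y^2\sum_{q\le Q_0}\sum_{(a,q)=1}\int_{|\eta|\le 1/(10x)}|\Delta_{q,a}(\eta)|^2\de\eta$ with $Q_0=x^{\eps/3}y$, and here $y\le x^{1/3-\eps}$ is used crucially. Writing $\Delta_{q,a}(\eta)=\sum_{b\bmod q}\e(ab/q)\bigl(v_b(\eta)-\tfrac{\Ind_{(b,q)=1}}{\phi(q)}T(\eta)\bigr)$ where $v_b(\eta):=\sum_{n\le x,\,n\equiv b\,(q)}\Lambda(n)\e(n\eta)$, orthogonality over residues mod $q$ gives $\sum_{(a,q)=1}\int_{|\eta|\le 1/(10x)}|\Delta_{q,a}|^2\le q\sum_{b\bmod q}\int_{|\eta|\le 1/(10x)}\bigl|v_b(\eta)-\tfrac{\Ind_{(b,q)=1}}{\phi(q)}T(\eta)\bigr|^2\de\eta$, and Gallagher's lemma (with window $1/(2\delta)=5x\ge x$) converts the low-frequency energy on $|\eta|\le\tfrac1{10x}$ into $\ll\tfrac1x\max_{s\le x}\bigl|\psi(s;q,b)-\tfrac{s}{\phi(q)}\Ind_{(b,q)=1}\bigr|^2$ up to negligible terms. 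Summing over $q\le Q_0$ and applying the Barban--Davenport--Halberstam theorem in the form $\sum_{q\le Q_0}\sum_{(b,q)=1}\max_{s\le x}\bigl|\psi(s;q,b)-\tfrac{s}{\phi(q)}\bigr|^2\ll Q_0\,x\,(\log x)^{O(1)}$ yields $\mathcal{E}\ll y^2Q_0^2(\log x)^{O(1)}=y^4x^{2\eps/3}(\log x)^{O(1)}$, which is $o_\eps(xy\log x)$ exactly because $y\le x^{1/3-\eps}$. I expect this final step to be the crux: the interaction between Gallagher's lemma and the orthogonality relations must be executed carefully, one needs Barban--Davenport--Halberstam uniformly over $s\le x$, and the accounting shows that $x^{1/3}$ is precisely the point at which the mean-square error per progression ($\asymp x\log x/q$ on average) falls below the main term $\asymp x/\phi(q)^2$ produced by each short major arc.
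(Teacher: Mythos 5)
Your main-term extraction is sound and essentially reproduces the paper's mechanism: pass to the Fourier side (the paper uses the Montgomery--Vaughan large sieve at the points $a/q-\alpha$ rather than Parseval plus positivity on arcs, but these are interchangeable here), restrict to major arcs with denominators slightly larger than $y$ so that each modulus $q$ contributes $\asymp\phi(q)/y$ admissible fractions, and harvest $\gg xy\log x$ from $\sum_q \mu^2(q)/\phi(q)$. The gap is in the error term. You bound $|\psi|^2$ from below by $\tfrac12|M|^2-|\Delta|^2$, which forces you to control the error \emph{in mean square}, and the input you invoke, $\sum_{q\le Q_0}\sum_{(b,q)=1}\max_{s\le x}\bigl|\psi(s;q,b)-\tfrac{s}{\phi(q)}\bigr|^2\ll Q_0x(\log x)^{O(1)}$ with $Q_0\asymp x^{1/3+o(1)}$, is not the Barban--Davenport--Halberstam theorem and is not a known result. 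BDH gives $\ll Qx\log x$ only in the range $x(\log x)^{-A}\le Q\le x$; for general $Q$ the correct statement carries an additional term $x^2(\log x)^{-A}$ coming from characters of small conductor (which reappear even if you restrict the moduli to $q\ge x^{\eps/4}y$, since imprimitive characters mod $q$ are induced by primitive ones of small conductor). That extra term contributes $\gg y^2Q_0\,x(\log x)^{-A}$ to your $\mathcal{E}$, which dwarfs the main term $xy\log x$. Indeed the bound you want is essentially an averaged form of Montgomery's variance conjecture for primes in progressions to moduli near $x^{1/3}$; even under GRH one only gets $\ll Q_0^2x(\log x)^4$ there, which is again too large. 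So the error analysis as written does not close.

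The repair is exactly the device the paper uses: keep the error \emph{linear}. Write $|\psi(x;a/q+\eta)|^2\ge |M|^2-2|M|\,|\Delta_{q,a}(\eta)|$ with $M=\tfrac{\mu(q)}{\phi(q)}T(\eta)$; the error is then weighted by $|M|\ll x/\phi(q)$, and after summing over the $\asymp\phi(q)/y$ values of $a$ per modulus the $\phi(q)$'s cancel, leaving $\ll \tfrac{y}{x}\sum_{q\le Q_0}\max_{(a,q)=1}\sup_{|\eta|\le 1/(10x)}|\Delta_{q,a}(\eta)|$. This is controlled by a Bombieri--Vinogradov--type theorem for exponential sums with $\max$ over $a$ and over the small shift, namely the Liu--Zhan estimate (Theorem~6 of the paper), valid precisely for $Q_0\le x^{1/3}(\log x)^{-E}$ --- which is also the true source of the exponent $1/3$, rather than the variance accounting you describe. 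With that substitution your argument becomes a correct (integral-form) variant of the paper's proof.
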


The following corollary then answers Aymone and Ramar\'e's question.

\begin{corollary}\label{main-cor}
    For all $\eps > 0,$ we have $$\inf_{\alpha\in\R} \sup_{n\le x} \big|\psi(n; \alpha) \big| \gg x^{1/6 - \eps}$$ and $$\inf_{\alpha\in\R} \sup_{n\le x} \big|\pi(n; \alpha) \big| \gg x^{1/6 - \eps}.$$
\end{corollary}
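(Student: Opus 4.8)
The plan is to deduce Corollary \ref{main-cor} from Theorem \ref{main-uniform} by choosing a near-optimal window length and observing that each window sum, after accounting for the truncations at $0$ and $x$, is a difference of two values of a partial-sum function; the only genuinely new ingredient is an $L^2$ estimate showing that the proper prime powers $p^k$ ($k\ge 2$) contribute negligibly to the averaged window sum, which is what will let us pass from $\psi$ to $\pi$. Throughout, fix $\eps>0$ and assume $x\ge 2$; for bounded $x$ both inequalities are trivial since $\sup_{n\le x}|\psi(n;\alpha)|\ge|\psi(2;\alpha)|=\log 2$ and $\sup_{n\le x}|\pi(n;\alpha)|\ge|\pi(2;\alpha)|=1$, so I may take $x$ large and set $y:=\lfloor x^{1/3-\eps}\rfloor\ge1$.

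First I would treat $\psi$. Write $\psi(t;\alpha):=\sum_{m\le t}\Lambda(m)\e(\alpha m)$, so that $\psi(0;\alpha)=0$, and for $-y<n\le x$ denote by $W_n(\alpha)$ the inner window sum of Theorem \ref{main-uniform}. Since $\{m: n<m\le n+y,\ 1\le m\le x\}$ is an interval of integers, $W_n(\alpha)=\psi(b_n;\alpha)-\psi(a_n;\alpha)$ with $0\le a_n\le b_n\le x$, hence $|W_n(\alpha)|\le 2\sup_{n\le x}|\psi(n;\alpha)|$. Summing over the at most $2x$ admissible $n$, Theorem \ref{main-uniform} gives $8\sup_{n\le x}|\psi(n;\alpha)|^2\ge\frac1x\sum_{-y<n\le x}|W_n(\alpha)|^2\gg y\log x\gg x^{1/3-\eps}\log x$, uniformly in $\alpha$; taking square roots yields $\sup_{n\le x}|\psi(n;\alpha)|\gg x^{1/6-\eps}$ for $x$ large, which is the first assertion.

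For $\pi$, I would split off the prime powers. Let $W_n^*(\alpha)$ be the same window sum restricted to prime $m$ (with weight $\log m$), and $V_n(\alpha):=W_n(\alpha)-W_n^*(\alpha)$, which is supported on proper prime powers. The crucial claim is $\frac1x\sum_{-y<n\le x}|V_n(\alpha)|^2=o(1)$ uniformly in $\alpha$. Bounding each summand by the corresponding sum of $\log p\le\log x$, expanding the square and exchanging summation, the left side is $\le\frac{y(\log x)^2}{x}N$, where $N$ is the number of pairs $(m_1,m_2)$ of proper prime powers in $[1,x]$ with $|m_1-m_2|<y$ (each counted with multiplicity at most $y$, the number of $n$ with $m_1,m_2\in(n,n+y]$). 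To bound $N$, use that there are $\ll\sqrt z$ perfect powers up to $z$: pairs whose smaller coordinate is $\le y^2$ both lie in $[1,2y^2]$ and contribute $\ll y^2$; while for each of the $\ll\sqrt x$ proper prime powers $m_1>y^2$ and each exponent $k\ge2$, consecutive $k$-th powers near $m_1$ differ by $\gg y$ (since $m_1>y^2$ forces the relevant base to be large), so only $\ll\log x$ proper prime powers lie within $y$ of $m_1$. Hence $N\ll y^2+\sqrt x\log x$, and $\frac{y(\log x)^2}{x}N\ll x^{-3\eps}(\log x)^2+x^{-1/6-\eps}(\log x)^3=o(1)$ because $y\le x^{1/3-\eps}$.

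Finally I would conclude. By the triangle inequality for the norm $\|f\|:=(\frac1x\sum_{-y<n\le x}|f_n|^2)^{1/2}$ and the $\psi$-estimate, $\|W^*\|\ge\|W\|-\|V\|\gg\sqrt{y\log x}$. Writing $\vartheta^*(t;\alpha):=\sum_{p\le t}(\log p)\e(\alpha p)$, we have $W_n^*(\alpha)=\vartheta^*(b_n;\alpha)-\vartheta^*(a_n;\alpha)$, so the same endpoint argument gives $\sup_{n\le x}|\vartheta^*(n;\alpha)|\gg\sqrt{y\log x}$, uniformly in $\alpha$. Partial summation, $\vartheta^*(t;\alpha)=(\log t)\pi(t;\alpha)-\int_2^t u^{-1}\pi(u;\alpha)\,\de u$, then gives $|\vartheta^*(t;\alpha)|\le 2(\log x)\sup_{u\le x}|\pi(u;\alpha)|$ for $t\le x$, whence $\sup_{n\le x}|\pi(n;\alpha)|\gg\sqrt{y\log x}/\log x\gg x^{1/6-\eps}$ for $x$ large, uniformly in $\alpha$, as required. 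The main obstacle is the prime-power $L^2$ estimate of the third step: one must rule out any clustering conspiracy among proper prime powers, and this is precisely where the restriction $y\le x^{1/3-\eps}$ is used.
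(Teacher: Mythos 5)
Your proposal is correct and follows essentially the same route as the paper: choose $y=x^{1/3-\eps}$, observe that each window sum is a difference of two partial sums so the supremum dominates the $L^2$ average, and invoke Theorem \ref{main-uniform}. The only divergence is in passing from $\psi$ to $\pi$: the paper argues pointwise, restricting to $n\ge y^2$ so that each window contains at most one $p^k$ per exponent $k\ge 2$ and replacing $\log p$ by $\log n$ (then discarding the $n\le y^2$ terms from the average), whereas you prove an $L^2$ bound on the proper-prime-power contribution by counting pairs of nearby prime powers and then use partial summation to pass from $\sum_{p\le t}(\log p)\e(\alpha p)$ to $\pi(t;\alpha)$ --- both variants are valid and of comparable length.
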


It is well known Diophantine approximations of $\alpha$ play a key role in the behaviour of $\psi(x; \alpha),$ as can be seen by the role of $R(x,\alpha)$ in Theorem \ref{vino}. Similarly, when we have a Diophantine approximation of suitable height (which happens when $R(y,\alpha) \asymp y^{1/2}$) and assuming the Generalized Riemann Hypothesis (GRH) for all Dirichlet $L$-functions, we can improve on Theorem \ref{main-uniform}.

\begin{theorem}\label{main-limsup}
    Assume GRH. Fix $\alpha\in\R$ and $\eps, \eps' > 0.$ Then for $y \le x^{5/9 - \eps}$ such that we have a rational approximation $\left|\alpha - \frac{u}{s}\right| \le \frac{1}{s^2}$ for some irreducible fraction $\frac{u}{s}$ with $\eps' s^2 \le y \le s^2/12,$ we have $$\frac{1}{x} \sum_{-y < n\le x} \left|\sum_{\substack{n< m\le n+y \\ 1 \le m \le x}} \Lambda(m) \e(\alpha m)\right|^2 \gg y\log x,$$ where the implied constant depends only on $\eps$ and $\eps'.$
\end{theorem}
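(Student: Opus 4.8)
The plan is to lower–bound the averaged short sum directly. Writing $S(n)=\psi(n+y;\alpha)-\psi(n;\alpha)$ for $1\le n\le x-y$ and discarding the (nonnegative) boundary terms $n\le 0$ and $n>x-y$, it suffices to prove $\frac1x\sum_{1\le n\le x-y}|S(n)|^2\gg y\log x$. Put $\delta=\alpha-\frac us$, so $|\delta|\le 1/s^2$ and, by hypothesis, $|\delta|y\le\frac1{12}$ and $s^2\asymp_{\eps'}y$. Decomposing into characters modulo $s$, and using that the nontrivial zeros of $L(\cdot,\chi)$ coincide with those of the primitive character $\chi^\ast$ inducing $\chi$, I would apply the truncated explicit formula (GRH giving $\rho=\tfrac12+i\gamma$) to write $S(n)=M(n)+\mathcal Z(n)+R(n)$, where
$$\mathcal Z(n)=-\frac1{\phi(s)}\sum_{f\mid s}\ \sum_{\substack{\chi^\ast\bmod f\\ \mathrm{primitive}}}c_{\chi^\ast}(u)\sum_{|\gamma_{\chi^\ast}|\le T}\int_n^{n+y}t^{\rho-1}\e(\delta t)\de t,$$
$M(n)=\frac{c_s(u)}{\phi(s)}\int_n^{n+y}\e(\delta t)\de t$ has modulus $\le y/\phi(s)\ll\sqrt y\log\log x$ (and independent of $n$), $R(n)$ collects the truncation error (controlled by choosing the height $T$ a suitable power of $x$), the contribution of prime powers with $(m,s)>1$, and the polar term, and $c_{\chi^\ast}(u)$ is the associated Gauss-sum-type coefficient, which satisfies $\sum_{f\mid s}\sum_{\chi^\ast}|c_{\chi^\ast}(u)|^2=\phi(s)^2$ and $|c_{\chi^\ast}(u)|\le\sqrt f$. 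Since $\frac1x\sum_n|M(n)|^2\ll y(\log\log x)^2=o(y\log x)$ and (for a good $T$) the $R$-term is also $o(y\log x)$ on average, the inequality $|M+\mathcal Z+R|^2\ge\tfrac12|\mathcal Z|^2-2|M+R|^2$ reduces the theorem to $\frac1x\sum_{1\le n\le x-y}|\mathcal Z(n)|^2\gg y\log x$.

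For a single zero $\rho=\tfrac12+i\gamma$ with $1\ll|\gamma|\ll x/y$, a stationary-phase computation — the phase $\tfrac{\gamma}{2\pi}\log t+\delta t$ having its stationary point near $t\asymp|\gamma|y\in[1,x]$, and $|\delta|y\le\tfrac1{12}$ keeping the relevant Fejér factor large — gives $\frac1x\sum_{n\le x}\bigl|\int_n^{n+y}t^{\rho-1}\e(\delta t)\de t\bigr|^2\asymp\frac{y^2}{x}\log\frac{x}{|\gamma|y}$. Summing over the zeros of $L(\cdot,\chi^\ast)$ up to height $\asymp x/y$ — of which there are $\asymp\frac xy\log\frac xy$, with Riemann–von Mangoldt density $\asymp\log(f|\gamma|)$, which is $\asymp\log x$ uniformly over the (few, since $s\ll y^{1/2}\ll x^{5/18}$) moduli $f\mid s$ — one finds $\sum_{|\gamma_{\chi^\ast}|\le x/y}\frac{y^2}{x}\log\frac{x}{|\gamma|y}\asymp\frac{y^2}{x}\int_1^{x/y}\log\tfrac{x}{ty}\,\log t\,\de t\asymp y\log x$. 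Hence the diagonal in the zeros contributes, for each $\chi^\ast$, an amount $\asymp\frac{|c_{\chi^\ast}(u)|^2}{\phi(s)^2}\,y\log x$, and summing over all $f\mid s$ and all primitive $\chi^\ast\bmod f$ and invoking $\sum|c_{\chi^\ast}(u)|^2=\phi(s)^2$, the diagonal alone already delivers $\gg y\log x$ with a positive absolute implied constant.

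The crux is showing the off-diagonal $\frac1x\sum_{n\le x}\sum_{(\chi^\ast,\gamma)\ne(\chi^{\ast\ast},\gamma')}c_{\chi^\ast}(u)\overline{c_{\chi^{\ast\ast}}(u)}\int_n^{n+y}t^{\rho-1}\e(\delta t)\de t\,\overline{\int_n^{n+y}u^{\rho'-1}\e(\delta u)\de u}$ is $o(y\log x)$. Performing the $n$-sum turns each summand into an integral against the Fejér triangle weight $(y-|t-u|)_+$; on the bulk range $t\asymp x$ the oscillation $t^{i(\gamma-\gamma')}$ yields a saving in terms of $|\gamma-\gamma'|$, while the small-$t$ part of the inner integral is disposed of by a further stationary-phase estimate. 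One is then left with a double sum over pairs of zeros of the $L(\cdot,\chi^\ast)$ weighted by $\tfrac{y^2}{x}\,|c_{\chi^\ast}(u)|\,|c_{\chi^{\ast\ast}}(u)|$ times a factor decaying in $|\gamma-\gamma'|$ (up to logarithmic losses), and here the trivial bound $|c_{\chi^\ast}(u)|\le\sqrt f\le\sqrt s$ loses a factor of order $\sqrt y\cdot\mathrm{polylog}\,x$ against the diagonal. Recovering this is the main obstacle and needs genuine cancellation from two sources: a mean-value / pair-correlation estimate for the zeros of the relevant family of Dirichlet $L$-functions (so that close pairs of zeros are not much more frequent than for a Poisson process), and — for the terms with $\chi^\ast\ne\chi^{\ast\ast}$ — the extra cancellation in $\sum_{\chi^\ast}c_{\chi^\ast}(u)(\cdots)$ beyond the orthogonality identity $\sum|c_{\chi^\ast}(u)|^2=\phi(s)^2$, which comes from the explicit shape of $c_{\chi^\ast}(u)$ (a unit multiple of $\mu(s/f)\tau(\overline{\chi^\ast})$ when $s$ is squarefree, and smaller otherwise). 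I expect the hypotheses $y\le x^{5/9-\eps}$ and $\eps's^2\le y\le s^2/12$ to be exactly what makes this off-diagonal contribution drop below $y\log x$: the former bounds both the number of relevant zeros and the truncation error, while the latter forces $s\asymp\sqrt y$ (so the Gauss-sum coefficients are as small as possible and $M$ is negligible) and keeps $|\delta|y$ bounded away from $\tfrac14$. The remaining bookkeeping — optimising $T$, the boundary terms in $n$, and the $(m,s)>1$ prime powers — is routine.
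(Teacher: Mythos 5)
Your route (explicit formula, splitting $S(n)$ into a zero sum $\mathcal Z(n)$ over the characters mod $s$, and trying to prove $\frac1x\sum_n|\mathcal Z(n)|^2\gg y\log x$ by diagonal domination) is genuinely different from the paper's, and it has a gap that you yourself flag but do not close: the off-diagonal term. A lower bound for a second moment is not implied by computing the diagonal; you must show the off-diagonal pairs of zeros $(\rho,\rho')$ do not cancel the diagonal, and the input you say you need for this --- ``a mean-value / pair-correlation estimate for the zeros of the relevant family of Dirichlet $L$-functions'' --- is precisely what GRH does \emph{not} provide. This is not a bookkeeping issue: by the Goldston--Montgomery correspondence, asymptotics for second moments of primes in short intervals are essentially \emph{equivalent} to pair correlation statements about zeros, so the missing step is of the same depth as the theorem itself (in fact harder, since you also need the cross-character terms $\chi^\ast\ne\chi^{\ast\ast}$). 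Your appeal to ``extra cancellation from the explicit shape of $c_{\chi^\ast}(u)$'' is likewise unsubstantiated. As written, the argument proves only the diagonal lower bound for $|\mathcal Z|^2$, which does not bound $\frac1x\sum_n|S(n)|^2$ from below.

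The paper avoids zeros entirely. It applies the Montgomery--Vaughan large sieve to $n\mapsto\psi(n,n+y;\alpha)$, whose Fourier transform factors as $\psi(x;\alpha+\beta)E_y(-\beta)$; evaluating at the well-spaced points $\beta=a/q-\alpha$ with $q\le Q:=x^{1/3-\eps/6}$ and $|\alpha-a/q|\le 1/(6y)$ gives
$$\frac1x\sum_{-y<n\le x}\big|\psi(n,n+y;\alpha)\big|^2\gg\frac{y^2}{x^2}\sum_{a/q\in\A}\big|\psi(x;a/q)\big|^2,$$
so GRH is only needed pointwise at rationals, where it gives $\psi(x;a/q)=x\mu(q)/\phi(q)+O((xq)^{1/2}(\log x)^2)$ --- no pair correlation required. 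The remaining issue, which your proposal does not confront at all, is that one must exhibit enough fractions $a/q\in\A$ with $q$ \emph{squarefree} (otherwise $\mu(q)=0$ kills the main term); the paper uses your hypothesis $\eps's^2\le y\le s^2/12$ together with Heath-Brown's theorem on Diophantine approximation by fractions with squarefree denominators to get $\gg Q'^2/s^2$ such fractions at every dyadic scale $y^{3/5+\eps/6}\le Q'\le Q/2$. Each scale contributes $\gg y$ and there are $\gg\log x$ scales. This also explains the exponent $5/9$: one needs $y^{3/5}\ll Q\ll x^{1/3}$, the upper bound on $Q$ coming from the GRH error term and the lower bound from Heath-Brown's exponent $6/5$ --- not, as you guess, from truncation heights or zero counts. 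To salvage your write-up you would either have to supply the pair-correlation and cross-character cancellation inputs (currently out of reach), or switch to the large-sieve reduction.
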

\begin{corollary}\label{grh-cor}
    Assume GRH. Then for all irrational $\alpha,$ we have $$\limsup_{n \to \infty} n^{- 5/18 +\eps} \big|\psi(n;\alpha)\big| > 0.$$
\end{corollary}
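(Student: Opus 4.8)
The plan is to deduce this from Theorem~\ref{main-limsup} exactly as one deduces Corollary~\ref{main-cor} from Theorem~\ref{main-uniform}: each short-interval sum occurring in the $L^2$ lower bound is, up to the truncation $1\le m\le x$, a difference of two values of $\psi(\cdot;\alpha)$, so an $L^2$ lower bound for those sums forces $\sup_{n\le x}|\psi(n;\alpha)|$ to be large. (GRH enters only through Theorem~\ref{main-limsup}.) Fix an irrational $\alpha$ and, without loss of generality, $\eps\in(0,5/18)$. By the theory of continued fractions there is a sequence of irreducible fractions $u/s$ with $s\to\infty$ and $\left|\alpha-\frac{u}{s}\right|<\frac1{s^2}$. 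For each such $s$, assumed large, I would set $y:=\lfloor s^2/12\rfloor$ and $x:=\lceil y^{9/(5-9\eps)}\rceil$. Then $s^2/24\le y\le s^2/12$, and since $\frac59-\eps=\frac{5-9\eps}{9}$ one has $x^{5/9-\eps}\ge y$, so all the hypotheses of Theorem~\ref{main-limsup} are met, with $\eps'=1/24$. Note also $x\asymp s^{18/(5-9\eps)}$, which is $\gg s^2\asymp y$ for $\eps>0$, so in particular $y\le x$.

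Applying Theorem~\ref{main-limsup} gives $\frac1x\sum_{-y<n\le x}|S_n|^2\gg y\log x$, with $S_n:=\sum_{\substack{n<m\le n+y\\1\le m\le x}}\Lambda(m)\e(\alpha m)$ and the implied constant depending only on $\eps$. Writing $M:=\sup_{n\le x}|\psi(n;\alpha)|$, I would split the range of $n$ into $n<0$, $0\le n\le x-y$, and $x-y<n\le x$ (which partition $(-y,x]$ since $y\le x$) and check that $S_n$ equals $\psi(n+y;\alpha)$, $\psi(n+y;\alpha)-\psi(n;\alpha)$, or $\psi(x;\alpha)-\psi(n;\alpha)$ respectively, always with all arguments in $[0,x]$; hence $|S_n|\le 2M$. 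Since the sum over $n$ has $\ll x$ terms, this yields $M^2\gg y\log x$, so $M\gg\sqrt{y\log x}\gg\sqrt y\asymp s$.

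It remains to turn $M\gg s$ into a bound in $x$. From $x\asymp s^{18/(5-9\eps)}$ we get $x^{5/18-\eps}=x^{(5-18\eps)/18}\asymp s^{(5-18\eps)/(5-9\eps)}$, and $(5-18\eps)/(5-9\eps)<1$ for every $\eps>0$, so $M\gg s\gg x^{5/18-\eps}$. Thus, for each large $s$, the integer $n_s\le x$ at which $|\psi(\cdot;\alpha)|$ attains $M$ satisfies $|\psi(n_s;\alpha)|\gg x^{5/18-\eps}\ge n_s^{5/18-\eps}$, using $n_s\le x$ and positivity of the exponent; and the trivial bound $|\psi(n;\alpha)|\le\psi(n)\ll n$ forces $n_s\to\infty$ as $s\to\infty$. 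So $\{n_s\}$ is an infinite set of integers along which $n^{-5/18+\eps}|\psi(n;\alpha)|\gg 1$, which is precisely $\limsup_{n\to\infty}n^{-5/18+\eps}|\psi(n;\alpha)|>0$. I do not expect any real obstacle: all the substance is in Theorem~\ref{main-limsup}, and the only thing to get right is the optimization — taking $y$ as large and $x$ as small as the constraints $\eps's^2\le y\le s^2/12$ and $y\le x^{5/9-\eps}$ allow, so that the bound $M\gg s$ beats $x^{5/18-\eps}$.
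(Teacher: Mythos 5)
Your proposal is correct and follows essentially the same route as the paper: choose $y\asymp s^2/12$ and $x\asymp y^{9/5+O(\eps)}$ from a continued-fraction convergent, apply Theorem~\ref{main-limsup}, and convert the $L^2$ lower bound into a lower bound on $\sup_{n\le x}|\psi(n;\alpha)|$ via the same three-case decomposition of the short-interval sums, with the trivial bound $|\psi(n;\alpha)|\ll n$ forcing the extremal $n$ to tend to infinity. The only (immaterial) difference is bookkeeping in the exponent of $x$ as a function of $\eps$.
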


We note that the assumption on $\alpha$ in Theorem \ref{main-limsup} plays a key role in going beyond the $x^{1/3}$ barrier of Theorem~\ref{main-uniform}, as discussed in Section \ref{y_ge_q}. However, when this condition is verified, we do not need an assumption as strong as GRH to get beyond $y = x^{1/3},$ even though the best known character sum estimates barely fail to improve the range of $y.$

\subsection{The case of divisors}

For the sake of illustration, we now consider the case of the divisor function $\tau,$ where our method gives results that are within a $x^{o(1)}$ factor of the truth.

\begin{theorem}\label{uniform-tau}
    Let $\alpha\in\R$ and $\eps > 0.$ Then there is an absolute constant $C$ such that for $y \le Y := x\left(\frac{\log\log x}{C\log x}\right)^2,$ whenever we have a rational approximation $\left|\alpha - \frac{u}{s}\right| \le \frac{1}{s^2}$ for some irreducible fraction $\frac{u}{s}$ with $\eps s^2 \le y \le s^2/12,$ we have $$\frac{1}{x} \sum_{-y < n\le x} \left|\sum_{\substack{n< m\le n+y \\ 1 \le m \le x}} \tau(m) \e(\alpha m)\right|^2 \gg y \big(\log (x/y)\big)^2 \log(Y/y),$$ where the implied constant depends only on $\eps.$ In particular, for all irrational $\alpha,$ we have
    $$\limsup_{x \to \infty} x^{-1/2}(\log x)(\log\log x)^{-2} \left|\sum_{n\le x} \tau(n) \e(\alpha n)\right| > 0.$$
\end{theorem}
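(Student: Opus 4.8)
The plan is to specialise the general method (Theorem~\ref{initial-prop} and Proposition~\ref{y-le-q}) to $f=\tau$ — just as Theorems~\ref{main-uniform} and~\ref{main-limsup} treat $f=\Lambda$ — and then to feed in the arithmetic of the divisor function; the $\limsup$ statement follows routinely. Concretely, for the main inequality I would apply the general lower bound with $f=\tau$ and the approximation $\left|\alpha-\tfrac{u}{s}\right|\le\tfrac1{s^2}$. The hypothesis $y\le s^2/12$ guarantees that on any interval of length $y$ the phase $m\mapsto\alpha m$ stays within $y/s^2\le1/12$ of the $s$-periodic phase $m\mapsto us^{-1}m$, so $\tau(m)\e(\alpha m)$ genuinely detects the residue classes modulo $s$ on every window of length $y$; together with $y\asymp s^2\ge s$ this puts us in the regime of Section~\ref{y_ge_q}. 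The machinery then reduces the second moment to a lower bound for $\tau$ weighted against its $s$-periodic major-arc model. Using $\tau=\Ind*\Ind$ and the classical asymptotics for twisted divisor sums and divisor correlations (Ingham, Estermann): the relevant main term, coming from $\sum_{m\le x}\tau(m)\e(um/s)$, is of size
\[
\frac{x}{s}\,\log\!\frac{x}{s^2}\ \asymp\ \frac{x}{\sqrt{y}}\,\log\!\frac{x}{y},
\]
which is genuine since $y\le Y\ll x$ forces $\log(x/s^2)\asymp\log(x/y)\gg\log\log x$. A key feature is that this main term carries a definite sign: opening $\sigma_{-1}(h)=\sum_{d\mid h}d^{-1}$ (and the analogous divisor expansions of the lower coefficients of the shifted-convolution main term) reduces matters to sums $\sum_{|k|<M}(M-|k|)\e(\theta k)$, which are $\ge0$ by Poisson summation applied to the triangle function (the Fejér kernel), so the contributions reinforce rather than cancel. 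The normalisation in the general lemma already yields $\gg y(\log(x/y))^2$; retaining the full shape of the shifted-convolution main term — in effect summing comparable, essentially orthogonal contributions over the $\asymp\log(Y/y)$ scales on which the major-arc model at modulus $s$ remains effective — upgrades this to the stated $\gg y(\log(x/y))^2\log(Y/y)$.

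The main obstacle, and the reason for both the cut-off $y\le Y$ with its absolute constant $C$ and the loss from the ``diagonal'' size $y(\log x)^3$, is controlling the error term $E(x,h)$ in $\sum_{m\le x}\tau(m)\tau(m+h)=x\,Q_h(\log x)+E(x,h)$ (equivalently, the error in the Estermann/Voronoi expansion of twisted divisor sums, uniformly over the modulus $s\asymp\sqrt{y}$, which may be as large as $\asymp\sqrt{x}\,\log\log x/\log x$). For such large moduli the pointwise bound $E(x,h)\ll\sqrt{xs}$ is far too weak; one must instead exploit the averaging over $h$ provided by the weight $(y-|h|)\e(\alpha h)$, and — as in the treatment of $y\ge q$ in Section~\ref{y_ge_q} — a further averaging, to obtain a mean-square bound for $E$ that beats the main term precisely when $y\le Y$. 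I expect this to be the step that takes the most work.

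Finally, for the $\limsup$ statement write $T(N):=\sum_{m\le\min(N,x)}\tau(m)\e(\alpha m)$, so that each inner sum in the theorem is $T(n+y)-T(n)$ with $T(0)=0$, whence the left side of the main inequality is $\le 8\max_{0\le N\le x}|T(N)|^2$ and so $\max_{0\le N\le x}|T(N)|\gg\sqrt{y}\,\log(x/y)\,\sqrt{\log(Y/y)}$. Given an irrational $\alpha$, let $u/s$ run over its continued-fraction convergents, so $\left|\alpha-\tfrac{u}{s}\right|\le\tfrac1{s^2}$ and $s\to\infty$; assuming harmlessly $\eps<1/12$, put $y=\lceil\eps s^2\rceil\in[\eps s^2,s^2/12]$ and take $x$ to be the least integer with $Y(x)\ge2y$. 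Then $x\asymp y(\log x/\log\log x)^2\to\infty$, hence $\log x\asymp\log y$, $x/y\asymp(\log x/\log\log x)^2$, $\log(Y(x)/y)\asymp1$ and $\sqrt{y}\asymp\sqrt{x}\,\log\log x/\log x$, so that $\max_{N\le x}|T(N)|\gg\sqrt{x}\,(\log\log x)^2/\log x$. If $N^{\star}\le x$ attains this maximum, then $N^{\star}\log x\gg\sum_{m\le N^{\star}}\tau(m)\ge|T(N^{\star})|$, so $N^{\star}\gg\sqrt{x}(\log\log x)^2/(\log x)^2$ and $\log N^{\star}\ge\tfrac14\log x$ for $x$ large; since $N^{\star}\le x$ we conclude
\[
(N^{\star})^{-1/2}(\log N^{\star})(\log\log N^{\star})^{-2}\,\Big|\sum_{m\le N^{\star}}\tau(m)\,\e(\alpha m)\Big|\ \gg\ 1 .
\]
Letting the convergents of $\alpha$ run to infinity produces infinitely many $N^{\star}\to\infty$ for which this holds, so $\limsup_{x\to\infty}x^{-1/2}(\log x)(\log\log x)^{-2}\bigl|\sum_{n\le x}\tau(n)\e(\alpha n)\bigr|>0$.
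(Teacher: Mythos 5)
There is a genuine gap in the main inequality. Your use of Theorem~\ref{initial-prop} together with the single convergent $\tfrac{u}{s}$ (which does lie in $\A$ since $|\alpha-\tfrac us|\le\tfrac1{s^2}\le\tfrac1{12y}$ and $s\ll\sqrt{y}$) only produces one term of the right-hand side of (\ref{initial}), namely $\tfrac{y^2}{x^2}\bigl(\tfrac{x}{s}\log(x/s^2)\bigr)^2\asymp y(\log(x/y))^2$, exactly as you note. The upgrade to the stated bound with the extra factor $\log(Y/y)$ is where your argument is not a proof: you appeal to ``summing comparable, essentially orthogonal contributions over the $\asymp\log(Y/y)$ scales'' of a shifted-convolution main term, but you never identify what these contributions are or why they are present. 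The paper's mechanism is entirely different and purely Diophantine: Lemma~\ref{ratapprox} shows that from the single approximation $\tfrac us$ one can manufacture, via the mediant-type fractions $\tfrac{au+bv}{as+bt}$ built from a B\'ezout companion $\tfrac vt$, a full complement of $\asymp Q'^2/s^2$ reduced fractions $\tfrac aq$ with $Q'<q\le 2Q'$, \emph{all} within $\tfrac{2}{s^2}\le\tfrac1{6y}$ of $\alpha$, for every dyadic scale $C_2\sqrt{y}\le Q'\le Q/2$ with $Q\asymp x^{1/2}\log\log x/\log x$. Each such scale contributes $\gg y(\log(x/Q'^2))^2$ to the large-sieve lower bound, and there are $\gg\log(Y/y)$ scales; this is the entire source of the third logarithm and of the definition of $Y$. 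Without this (or an equivalent) multiplication of major arcs, the factor $\log(Y/y)$ does not appear.

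Relatedly, the route you flag as ``the step that takes the most work'' --- uniform control of the error in $\sum_{m\le x}\tau(m)\tau(m+h)$, i.e.\ the binary additive divisor problem with shifts up to $y$ and an implicit modulus $s\asymp\sqrt{y}$ as large as $\sqrt{x}\log\log x/\log x$ --- is not only unexecuted but unnecessary, and is far harder than what the theorem requires. The large-sieve reduction of Theorem~\ref{initial-prop} means one never touches divisor correlations: the only arithmetic input is the elementary hyperbola-method evaluation $\sum_{n\le x}\tau(n)\e(an/q)=\tfrac xq\bigl(\log(x/q^2)+2\gamma-1\bigr)+O\bigl(x^{1/2}(1+\log q)\bigr)$ (Lemma~\ref{tau-rational}), whose error term is what forces $q\le Q\asymp x^{1/2}\log\log x/\log x$ and hence the cutoff $y\le Y$. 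Your positivity discussion via Fej\'er kernels is likewise moot, since the large sieve already yields a sum of squares. Your deduction of the $\limsup$ statement from the main inequality (choice of $y=\lceil\eps s^2\rceil$ along convergents, $x$ minimal with $Y(x)\ge 2y$, and the lower bound on the maximizing $N^{\star}$) is correct and in fact more detailed than the paper, which leaves that step implicit; but it rests on the main inequality, which your proposal does not establish.
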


For comparison, Goldston and Pandey \cite{goldston} have shown the $L^{1}$ lower bound $$\int_0^1 \left|\sum_{n\le x} \tau(n) \e(\alpha n)\right| \de\alpha \gg x^{1/2},$$ but no non-trivial lower bound for fixed irrational $\alpha$ is known (note however than the exponential sum is trivially unbounded as $\tau$ is unbounded).

\begin{remark}
    As pointed out by Gabdullin (private communication), it follows from the Carleson--Hunt inequality \cite{hunt} that Theorem \ref{uniform-tau} is sharp up to a logarithmic factor for almost all $\alpha.$ Indeed, the Carleson--Hunt inequality applied for $\tau$ shows $$\int_0^1 \sup_{N\le x} \left|\sum_{n\le N} \tau(n) \e(\alpha n)\right|^2 \de \alpha \ll \sum_{n\le x} \tau(n)^2 \asymp x(\log x)^3,$$ so that for $\eps>0$ we have $$\sup_{N\le x} \left|\sum_{n\le N} \tau(n) \e(\alpha n)\right|^2 \le x(\log x)^{4+\eps}$$ for all $\alpha \in [0,1]$ outside of a set of measure $\ll (\log x)^{1+\eps}.$ It then follows considering $x = 2^k$ and using the Borel-Cantelli lemma that $$\limsup_{x \to \infty} x^{-1/2}(\log x)^{-2-\eps} \left|\sum_{n\le x} \tau(n) \e(\alpha n)\right| = 0$$ for almost all $\alpha.$ This shows Theorem \ref{uniform-tau} can be improved by a logarithmic factor at best. A similar upper bound can be obtained when $\tau$ is replaced by $\Lambda$ by the same argument.
\end{remark}

\subsection{Notation}

We fix $\alpha\in\R$ and $f$ an arithmetic function, which will later be taken to be $\Lambda$ or $\tau.$ We define the exponential sum $$F(x;\alpha) := \sum_{n\le x} f(n)\e(\alpha n),$$ as well as $F(x) := F(x; 0) = \sum_{n\le x} f(n).$ For $f=1,$ we note $$E_x(\alpha) := \sum_{1\le n\le x} \e(\alpha n).$$

We then define the sums on short intervals, $$F(n, n+y; \alpha) := F(n+y;\alpha) - F(n;\alpha) = \sum_{n< m\le n+y} \e(\alpha m)f(m).$$ When $f = \Lambda,$ we write $\psi$ instead of $F.$

Finally, when $Q,y < x$ are fixed, we define 
\begin{equation}\label{def-A}
    \A = \A(Q,y) := \left\{\frac{a}{q} : q \le Q, (a, q) =1, \left|\alpha - \frac{a}{q}\right| \le \frac{1}{6y} \right\}
\end{equation}
the set of rational approximations of $\alpha$ of height $Q$ with precision $1/y,$ which plays a key role in our estimates.

Throughout the paper, the implicit constants from the $\ll$ and $O()$ notations may depend on constants $\eps$ and $C$ when relevant, but not on $\alpha.$

\subsection{Outline of the proofs}\label{sec-main-prop}

Our core idea is to look at the behaviour of the exponential sum in a random short interval. Indeed, fixing $f$ supported on $[1,x],$ we expect $F(n,n+y; \alpha)$ to behave like $F(n, n+y; a/q)$ whenever $\left|\alpha - \frac{a}{q}\right| \ll 1/y,$ and are able to control the error terms on average as soon as $q$ is small in terms of $x.$ Thus, instead of requiring $R(x,\alpha)$ to be small, we only need $R(y,\alpha)$ to be small (in terms of $x$), which can be ensured by taking $y$ small enough.

Our tool to estimate the average $\sum_n \big|F(n,n+y ; \alpha)\big|^2$ is the large sieve (reminded below), which when applied to $n \mapsto F(n, n+y;\alpha)$ gives us a lower bound on this average based on the value of its Fourier transform at chosen points.

\begin{theorem}[Montgomery-Vaughan large sieve, \cite{mv}]\label{large-sieve}
    Let $M, N\in\Z, N\ge 0,$ $f$ supported on $[M+1, M+N],$ and $\B \subset \R/\Z$ a finite set, with $$\delta := \min_{\beta \ne \beta'\in \B} |\beta - \beta'|.$$ Then 
    \begin{equation}
        \sum_{\beta\in\B} \big|F(M, M+N; \beta)\big|^2 \le \big(N+\delta^{-1}\big)\sum_{n = M+1}^{M+N} \big|f(n)\big|^2.
    \end{equation}
\end{theorem}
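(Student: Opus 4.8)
The plan is to reduce to the classical (Fourier-analytic) large sieve inequality in its exponential-sum form, which states that for any finite well-spaced set $\B\subset\R/\Z$ with separation $\delta$ and any complex coefficients $(c_n)_{M<n\le M+N}$ one has $\sum_{\beta\in\B}\big|\sum_{n}c_n\e(\beta n)\big|^2 \le (N+\delta^{-1})\sum_n|c_n|^2$. Since $F(M,M+N;\beta) = \sum_{M<n\le M+N} f(n)\e(\beta n)$ is exactly such a sum with $c_n = f(n)$ (and $f$ supported on $[M+1,M+N]$ means the sum has at most $N$ terms), the inequality follows immediately once the classical statement is in hand. So the real content is to prove the Montgomery--Vaughan form of the large sieve with the sharp constant $N+\delta^{-1}$ rather than the weaker $N + O(\delta^{-1})$ that comes out of cruder arguments.

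First I would record the duality principle: the inequality $\sum_{\beta}\big|\sum_n c_n\e(\beta n)\big|^2 \le \Delta \sum_n |c_n|^2$ for all $(c_n)$ is equivalent to the dual inequality $\sum_n\big|\sum_\beta z_\beta \e(-\beta n)\big|^2 \le \Delta \sum_\beta |z_\beta|^2$ for all $(z_\beta)_{\beta\in\B}$. Then I would expand the dual left-hand side as $\sum_{\beta,\beta'} z_\beta \overline{z_{\beta'}} \sum_{M<n\le M+N}\e((\beta'-\beta)n)$, bound the diagonal terms by $N\sum_\beta|z_\beta|^2$, and handle the off-diagonal terms by summing the geometric series and using $|\sum_{n}\e(\gamma n)|\le \min(N, \|\gamma\|^{-1})$ together with a careful estimate of $\sum_{\beta'\ne\beta}\|\beta-\beta'\|^{-1}$ exploiting the $\delta$-spacing. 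The standard route to the clean constant $N+\delta^{-1}$ is Montgomery and Vaughan's argument via Selberg's majorant function (or Vaaler's): one interpolates $f$ by a band-limited majorant/minorant and applies Poisson summation, or alternatively uses the Montgomery--Vaughan inequality on Hilbert-type bilinear forms $\big|\sum_{m\ne n} x_m\overline{x_n}/(\lambda_m-\lambda_n)\big| \le \tfrac{\pi}{\delta}\sum|x_n|^2$. Since this is a cited theorem, in practice I would simply invoke \cite{mv} for the sharp off-diagonal bookkeeping rather than reproving it.

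The only genuine obstacle, were one to insist on a self-contained proof, is obtaining the precise constant: getting \emph{some} bound of the shape $(N+C\delta^{-1})$ is elementary, but the sharp $\delta^{-1}$ (no constant) requires the extremal-function input of Beurling--Selberg or the sharp Hilbert inequality, which is the technical heart of \cite{mv}. For the purposes of this paper the sharp constant is convenient but not essential, so I would present the reduction to the classical large sieve as above and cite \cite{mv} for the stated form, noting that $F(M,M+N;\beta)$ is literally a trigonometric polynomial of length $\le N$ with coefficients $f(n)$, so the hypothesis "$f$ supported on $[M+1,M+N]$" is exactly what licenses the application.
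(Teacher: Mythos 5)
The paper states this result as a quoted theorem from \cite{mv} and gives no proof, so your decision to reduce to the classical exponential-sum form of the large sieve and invoke \cite{mv} for the sharp constant $N+\delta^{-1}$ is exactly what the paper does. Your sketch of the underlying argument (duality plus the Montgomery--Vaughan Hilbert-type inequality or the Selberg majorant) is an accurate account of the cited proof, and you correctly identify that the only nontrivial content beyond the classical $N+O(\delta^{-1})$ bound is the sharp off-diagonal constant.
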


We can now state and prove our main inequality from which we derive Theorems \ref{main-uniform}, \ref{main-limsup} and \ref{uniform-tau}. 

\begin{theorem}\label{initial-prop}
    Let $f$ supported on $[1,x].$ Then for $y \le x$ and $Q \le x^{1/2},$ we have 
    \begin{equation}\label{initial}
        \sum_{-y < n \le x} \big|F(n, n+y; \alpha)\big|^2 \gg \frac{1}{x} \sum_{\substack{q\le Q \\ (a,q) = 1}} \big|F(x; a/q)\big|^2 \left|E_y\left(\alpha - \frac{a}{q}\right)\right|^2 \gg \frac{y^2}{x} \sum_{\frac{a}{q}\in\A} \big|F(x; a/q)\big|^2,
    \end{equation}
    where $\A = \A(Q,y)$ is defined by (\ref{def-A}).
\end{theorem}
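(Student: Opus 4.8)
The idea is to use the large sieve (Theorem~\ref{large-sieve}) applied to the function $n \mapsto F(n, n+y; \alpha)$ viewed as a sequence indexed by $n$, and evaluate its discrete Fourier transform at the points $a/q \in \A$. So I would set up the "outer" large sieve with the interval of summation being $-y < n \le x$ (length $N = x + y$), and the test points being the fractions in $\A$, whose pairwise spacing $\delta$ satisfies $\delta \gg 1/Q^2 \gg 1/x$ since $Q \le x^{1/2}$. The content of the large sieve inequality, read as a lower bound, is that
\[
  \Big(\sum_{-y<n\le x}\big|F(n,n+y;\alpha)\big|^2\Big)\Big(\sum_{\frac{a}{q}\in\A}\Big|\sum_{-y<n\le x} F(n,n+y;\alpha)\,\e(-n a/q)\Big|^{-?}\Big)
\]
— wait, more carefully: the large sieve gives an \emph{upper} bound on $\sum_\beta |\widehat{G}(\beta)|^2$ in terms of $\|G\|_2^2$; to get a \emph{lower} bound on $\|G\|_2^2$ one instead applies large sieve duality, i.e. the dual inequality $\sum_n |\sum_\beta c_\beta \e(n\beta)|^2 \le (N+\delta^{-1})\sum_\beta |c_\beta|^2$, or equivalently one just uses Cauchy--Schwarz: for any fixed point $\beta$, $|\widehat{G}(\beta)|^2 \le (\text{number of }n)\cdot \|G\|_2^2$ is too lossy, so instead I would dualize. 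Concretely, pick coefficients $(c_{a/q})$ supported on $\A$, form $G(n) = \sum_n$... let me restate: apply the large sieve in its dual form to deduce
\[
  \sum_{-y<n\le x}\big|F(n,n+y;\alpha)\big|^2 \;\gg\; \frac{1}{x}\sum_{\frac{a}{q}\in\A}\Big|\sum_{-y<n\le x} F(n,n+y;\alpha)\,\e(-\tfrac{a}{q}n)\Big|^2,
\]
valid because $N + \delta^{-1} \ll x$.

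**The Fourier computation.** The heart of the matter is then to compute $\sum_{-y<n\le x} F(n,n+y;\alpha)\,\e(-\tfrac{a}{q}n)$ and show it is $\gg y\,|F(x;a/q)|$ for each $a/q \in \A$. Expanding, $\sum_n F(n,n+y;\alpha)\e(-\tfrac{a}{q}n) = \sum_n \sum_{n < m \le n+y} f(m)\e(\alpha m)\e(-\tfrac{a}{q}n)$; swapping the order, the inner sum over $n$ runs over $m - y \le n < m$, giving $\sum_m f(m)\e(\alpha m)\sum_{m-y\le n<m}\e(-\tfrac{a}{q}n)$, where the $n$-sum is a complete-ish geometric progression of length $y$. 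Writing $\alpha = \tfrac{a}{q} + \theta$ with $|\theta| \le \tfrac{1}{6y}$ by definition of $\A$, we have $\e(\alpha m)\cdot(\text{that geometric sum}) = \e(\tfrac{a}{q}m)\e(\theta m)\cdot \e(-\tfrac{a}{q}m)\sum_{j=1}^{y}\e(\tfrac{a}{q}j)$ roughly — the point is that the $\tfrac{a}{q}$-phases on $m$ cancel, leaving $\e(\theta m)$ times a weight $w(m) = \sum_{m-y \le n < m}\e(-\tfrac{a}{q}(n-m)) = \sum_{0 \le k < y}\e(\tfrac{a}{q}k)$, which is actually \emph{independent of $m$} and equal to $E_y(a/q)\e(\cdots)$ of modulus potentially small. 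Hmm — here is the real subtlety: that geometric sum in $j$ over a progression of length $y$ with ratio $\e(a/q)$ can be small when $q \nmid$ stuff. I think the correct move is the reverse: one should \emph{not} twist by $a/q$ but observe the identity must be arranged so the surviving factor is $\sum_{n} \e((\alpha - a/q)(\text{something}))$, i.e. the geometric sum has ratio $\e(\theta)$ with $|\theta| \le 1/(6y)$, hence length $y$, hence each term has real part $\ge \cos(\pi/3) = 1/2 > 0$ and the sum has modulus $\gg y$. So the bookkeeping must be set up (shifting the $n$-variable appropriately, e.g. summing $\e(-\tfrac{a}{q}(n - m))$ — no; rather one writes everything in terms of $\e(\alpha(m-n))$ type phases) so that the clean geometric sum that comes out has the \emph{small} frequency $\theta$, not $a/q$; then $|{\rm its\ value}| \gg y$, and after extracting it we are left with exactly $\sum_m f(m)\e(\tfrac{a}{q}m) = F(x;a/q)$ since $f$ is supported on $[1,x]$.

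**The main obstacle and cleanup.** The main obstacle is precisely this bookkeeping: making the change of variables and order of summation so that the geometric sum that factors out is governed by $\theta$ (length $y$, frequency $\le 1/(6y)$, hence $\gg y$ in modulus) rather than by $a/q$, \emph{and} simultaneously ensuring the residual $m$-sum is exactly $F(x;a/q)$ and not some truncated or shifted variant with hard-to-control tails. I expect one must be slightly careful at the edges $m \in [1,y]$ and $m \in [x-y, x]$ where the inner $n$-range is clipped by the constraint $-y < n \le x$; but since $f$ is supported on $[1,x]$ and $n$ ranges over $(-y, x]$, every $m \in [1,x]$ gets its full range of $y$ values of $n$, so there is in fact \emph{no} truncation — this is exactly why the summation range for $n$ was chosen to be $(-y,x]$ rather than $(0,x]$. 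So the identity should be exact: $\sum_{-y<n\le x} F(n,n+y;\alpha)\e(-\beta n) = \big(\sum_{0\le k<y}\e((\alpha-\beta)k)\big)\cdot\sum_{m} f(m)\e(\beta m)\cdot(\text{unimodular factor})$ with $\beta = a/q$, and taking $|\theta| = |\alpha - a/q| \le 1/(6y)$ gives $\big|\sum_{0\le k<y}\e(\theta k)\big| \ge \tfrac{1}{y}\sum_{0\le k<y}\cos(2\pi\theta k) \cdot y \ge \tfrac{y}{2}$ (each cosine $\ge \cos(\pi/3)$). Combining with the large-sieve lower bound yields
\[
  \sum_{-y<n\le x}\big|F(n,n+y;\alpha)\big|^2 \;\gg\; \frac{1}{x}\sum_{\frac{a}{q}\in\A}\Big(\frac{y}{2}\Big)^2\big|F(x;a/q)\big|^2 \;\gg\; \frac{y^2}{x}\sum_{\frac{a}{q}\in\A}\big|F(x;a/q)\big|^2,
\]
which is~(\ref{initial}). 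The only remaining point to verify is that $\delta^{-1} \ll x$: distinct fractions $a/q, a'/q'$ with denominators $\le Q \le x^{1/2}$ differ by $\ge 1/(qq') \ge 1/Q^2 \ge 1/x$, so $N + \delta^{-1} \le (x+y) + x \ll x$, as needed for the large sieve step.
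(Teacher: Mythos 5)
Your proposal is correct and follows essentially the same route as the paper: apply the Montgomery--Vaughan large sieve (read as a lower bound, no dual form needed) to $g(n) = F(n,n+y;\alpha)$, whose Fourier transform factors exactly as $\widehat g(\beta) = F(x;\alpha+\beta)E_y(-\beta)$ because $f$ is supported on $[1,x]$ and $n$ ranges over $(-y,x]$, then evaluate at well-spaced points where the $E_y$ factor has modulus $\gg y$. The one piece of bookkeeping you leave slightly off --- your written identity with test frequency $a/q$ would produce $F(x;\alpha-a/q)E_y(a/q)$, which is useless --- is resolved in the paper by taking the test set to be $\B=\{a/q-\alpha : q\le Q,\ (a,q)=1\}$ (same spacing $\delta\ge Q^{-2}\ge x^{-1}$), so that $\widehat g(a/q-\alpha)=F(x;a/q)\,E_y(\alpha-a/q)$ with $|E_y(\alpha-a/q)|\gg y$ for $\frac aq\in\A$, exactly as you describe in words.
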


\begin{proof}
    We fix $g : n\mapsto F(n, n+y;\alpha),$ which is supported on $[-y+1, x-1]$ and whose Fourier transform is given for $\beta\in\R$ by 
    \begin{align*}
        \sum_{-y< n < x} g(n)\e(\beta n) &= \sum_{-y < n < x} \left(\sum_{n < m \le n+y} f(m) \e(m\alpha)\right) \e(n\beta)
        \\&= \sum_{1\le m\le x} \sum_{1\le h \le y} f(m) \e(m\alpha) \e\big((m-h)\beta\big)  
        \\&= F(x; \alpha + \beta) E_y(-\beta),
    \end{align*}
    since $f$ is supported on $[1,x].$
    Applying Theorem \ref{large-sieve} to $g$ and $$\B := \left\{\frac{a}{q} - \alpha : q\le Q, (a,q) = 1\right\}$$ (which verifies $\delta \ge Q^{-2}$) thus gives 

    \begin{align*}
        \sum_{-y < n \le x} \big|F(n, n+y; \alpha)\big|^2 &\ge \frac{1}{\lfloor x\rfloor+\lfloor y\rfloor+Q^2}\sum_{q\le Q} \sum_{(a,q) = 1} \big|F(x; a/q)\big|^2 \left|E_y\left(\alpha - \frac{a}{q}\right)\right|^2 
        \\&\gg \frac{y^2}{x} \sum_{\frac{a}{q}\in\A} \big|F(x; a/q)\big|^2 
    \end{align*}
    as $y \le x, Q \le x^{1/2}$ and $E_y(\beta) \gg y\Ind_{|\beta|\le 1/6y}.$
\end{proof}

We now need to decide the relation between $y$ and $Q.$ When $y = Q^{1-\eps},$ we can find major arcs $\frac{a}{q}\in\A$ for every denominator $6y \le q \le Q.$ In this case we are able to estimate the right-hand side of (\ref{initial}) using a Bombieri-Vinogradov type estimate for exponential sums. In the case of primes (Theorem \ref{main-uniform}), we use the following estimate by Liu and Zhan (note that we actually only need the case $\lambda = 0$).

\begin{theorem}[{Liu and Zhan \cite[Theorem 3]{liuzhan}}]\label{liuzhan}
    For all $A > 0,$ there is a constant $E = E(A) > 0$ such that for all $x \ge 3$ and $$Q \le x^{1/3}(\log x)^{-E},\, \delta \le Q^{-3}(\log x)^{-E},$$ we have 
    $$\sum_{q\le Q} \max_{(a,q) = 1} \max_{|\lambda| < \delta} \left|\psi\left(x; a/q + \lambda\right) - \frac{\mu(q)}{\phi(q)} E_x(\lambda)\right| \ll x(\log x)^{-A}.$$
\end{theorem}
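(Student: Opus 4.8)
The statement is a Bombieri--Vinogradov-type theorem for exponential sums, and the plan is to prove it by the usual circle-method dichotomy between ``small'' and ``large'' moduli, combined with Vaughan's identity and a hybrid large sieve.

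\emph{Main-term extraction and small moduli.} First I would write $\beta = a/q+\lambda$ and split into residue classes, $\psi(x;\beta) = \sum_{\chi\Mod q}\left(\frac{1}{\phi(q)}\sum_{b\Mod q}\bar\chi(b)\e(ab/q)\right)\sum_{n\le x}\Lambda(n)\chi(n)\e(\lambda n)$. The principal character contributes $\frac{\mu(q)}{\phi(q)}\sum_{n\le x}\Lambda(n)\e(\lambda n)$ up to $O((\log x)^2)$ from prime powers, and replacing $\sum_{n\le x}\Lambda(n)\e(\lambda n)$ by $E_x(\lambda)$ costs, via the explicit formula for $\psi(t)$ and partial summation against $\e(\lambda t)$ — where one uses the oscillation of $\int_1^x\e(\lambda t)t^{\rho}\de t$ so as to lose nothing in $\lambda$ — an error $\ll x\exp(-c\sqrt{\log x})(\log x)^{O(1)}$ from the classical zero-free region. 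The non-principal characters are handled the same way using Siegel--Walfisz, so that for $q\le(\log x)^{B}$ one gets $\psi(x;a/q+\lambda)-\frac{\mu(q)}{\phi(q)}E_x(\lambda)\ll_A x(\log x)^{-A-1}$ uniformly in $a$ and $\lambda$, and summation over this range is negligible.

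\emph{Large moduli: Vaughan's identity and bilinear forms.} For $(\log x)^{B}<q\le Q$ the main term $\frac{\mu(q)}{\phi(q)}E_x(\lambda)$ is itself $O(x(\log x)^{-A})$ after summation, so it suffices to bound $\sum_{(\log x)^{B}<q\le Q}\max_{(a,q)=1}\max_{|\lambda|<\delta}|\psi(x;a/q+\lambda)|$. I would apply Vaughan's identity with threshold $\asymp x^{1/3}$ to split $\psi(x;\beta)$ into $O(1)$ sums: Type I sums $\sum_{d\le x^{1/3}}\alpha_d\sum_{m\le x/d}\e(\beta dm)$ with $\alpha_d\ll\log x$, and Type II sums $\sum_{x^{1/3}<d\le x^{2/3}}\sum_m\beta_d\gamma_m\e(\beta dm)$ with $|\beta_d|,|\gamma_m|\ll\tau(\cdot)$ and $dm\asymp x$. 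In a Type I sum the inner geometric progression is $\ll\min(x/d,\|\beta d\|^{-1})$; since the shift $|\lambda d|\le\delta x^{1/3}$ is small compared with $1/q$ under $\delta\le Q^{-3}(\log x)^{-E}$, one discards $\lambda$ and invokes the standard estimate for $\sum_{q\le Q}\sum_{d\le D}\min(x/d,\|ad/q\|^{-1})$, which with $D=x^{1/3}$ and $Q\le x^{1/3}$ is $\ll x(\log x)^{-A}$ once $E$ is large. The Type II sums I would treat by Cauchy--Schwarz in one variable and expansion of the square in the other, summing the geometric series in $d$ to reduce matters to $\sum_{(\log x)^{B}<q\le Q}\sum_{|k|\le M}\min(M,\|\beta k\|^{-1})$ with $M\le x^{2/3}$; here the uniformity over $\lambda\in(-\delta,\delta)$ is handled by a Gallagher-type device (covering $(-\delta,\delta)$ by $O(1+\delta Q^{2}M)$ translates of the Farey dissection at scale $Q^{-2}$, so that the relevant points become $\gg Q^{-2}$-separated), after which a large-sieve/divisor-sum estimate gives $\ll x(\log x)^{-A}$ exactly when $Q\le x^{1/3}(\log x)^{-E}$ and $\delta\le Q^{-3}(\log x)^{-E}$.

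\emph{Main obstacle.} The delicate point, and the source of both hypotheses, is the double uniformity in the modulus $q$ and in the continuous shift $\lambda$: without $\lambda$ this is a classical estimate valid up to $Q\le x^{1/3}$ (the Type II balance $DM\asymp x$ with $D\le x^{2/3}$ being the bottleneck), and reinstating $\max_{|\lambda|<\delta}$ forces one to trade the continuum of $\lambda$'s for a discrete $\gg Q^{-2}$-separated set via Gallagher's lemma, whose cardinality stays under control only when $\delta$ is as small as $Q^{-3}(\log x)^{-E}$. Everything else is bookkeeping: propagating the powers of $\log x$ through Vaughan's identity, the explicit-formula estimates and the $\min$-sum bounds, and choosing $B$ and $E$ as functions of $A$.
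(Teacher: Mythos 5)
This statement is quoted verbatim from Liu and Zhan; the paper offers no proof of it, so the only meaningful comparison is with the actual argument of Liu--Zhan (and the closely related work of Balog--Perelli). Measured against that, your small-moduli treatment is broadly fine, but the large-moduli half of your plan contains a fatal error. You claim that for $(\log x)^{B}<q\le Q$ the main term $\frac{\mu(q)}{\phi(q)}E_x(\lambda)$ is $O(x(\log x)^{-A})$ after summation over $q$, so that it suffices to bound $\sum_{q}\max_{a,\lambda}\big|\psi(x;a/q+\lambda)\big|$. This is false: at $\lambda=0$ one has $E_x(0)=\lfloor x\rfloor$ and $\sum_{(\log x)^{B}<q\le Q}\mu^{2}(q)/\phi(q)\asymp\log Q\asymp\log x$, so the sum of the main terms alone has order $x\log x$. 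Consequently the quantity you propose to bound is itself $\gg x\log x$ (each summand is, up to precisely the error the theorem controls, at least $\mu^{2}(q)x/\phi(q)$), and no Type I/II estimate can make it $\ll x(\log x)^{-A}$. The same defect surfaces concretely in your Type I step: $\sum_{d\le D}\min\big(x/d,\|ad/q\|^{-1}\big)$ is $\gg (x/q)\log D$ from the terms with $q\mid d$, and summing over $q\le Q$ already gives $\gg x(\log x)^{2}$.

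The repair is structural, not bookkeeping: the main term must stay subtracted for \emph{every} modulus up to $Q$. This forces the Gauss-sum/character decomposition $\psi(x;a/q+\lambda)=\frac{1}{\phi(q)}\sum_{\chi\bmod q}\chi(a)G(\overline{\chi})\sum_{n\le x}\Lambda(n)\chi(n)\e(\lambda n)+O\big((\log x)^{2}\big)$ for \emph{all} $q\le Q$ (exactly the identity used in the paper's proof of Lemma \ref{grh}), with the principal character producing $\frac{\mu(q)}{\phi(q)}$ times $\sum_{n\le x}\Lambda(n)\e(\lambda n)$, which is then compared to $E_x(\lambda)$. One is left with $\sum_{q\le Q}\frac{q^{1/2}}{\phi(q)}\sum_{\chi\ne\chi_0}\max_{|\lambda|<\delta}\big|\sum_{n\le x}\Lambda(n)\chi(n)\e(\lambda n)\big|$ after the bound $|G(\overline{\chi})|\le q^{1/2}$ and reduction to primitive characters; Liu and Zhan control this by Vaughan's identity combined with a hybrid large sieve over characters and the continuous variable $\lambda$. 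It is the Gauss-sum loss $q^{1/2}$ against the large-sieve factor $(Q^{2}+x)^{1/2}$ --- i.e.\ the requirement $Q^{3/2}x^{1/2}\le x(\log x)^{-A}$ --- that produces the barrier $Q\le x^{1/3}(\log x)^{-E}$, not the Type II balance you describe. Your Gallagher-type discretisation of $\lambda$ and the role you assign to the condition on $\delta$ are in the right spirit, but they cannot rescue an argument whose target quantity is already too large before any estimation begins.
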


Since Theorem \ref{liuzhan} requires $Q \ll x^{1/3},$ and only provides an accurate estimate of the right-hand side of (\ref{initial}) when $y \ll Q,$ we need the bound $y \ll x^{1/3}$ in Theorem \ref{main-uniform}.

However, when $Q \ll y \ll Q^2,$ we still expect to find some fractions $\frac{a}{q}$ with $q\le Q$ and $\left|\alpha - \frac{a}{q}\right| \ll \frac{1}{6y}.$ This is why when we assume information on \textit{all} $F(x; a/q)$ (given by GRH in Theorem \ref{main-limsup}), we are able to obtain results for larger $y$ such as Theorem \ref{main-limsup}.

\section{Major arcs in short intervals}

\subsection{The setting $y = Q^{1-\eps}$}

When we consider major arcs of denominator up to $Q = y^{1+\eps},$ we get the following result, from which we derive Theorem \ref{main-uniform} in Section \ref{bv}.

\begin{prop}\label{y-le-q}
    Assume $f$ is supported on $[1,x],$ fix $Q \le x^{1/2}$ and assume there is a constant $C$ and functions $g$ and $\eta$ (which may depend on $x$) with $\eta(q) \ll (\log q)^C$ such that 
    \begin{equation}\label{bv-e}
        \sum_{q\le Q} |g(q)| \max_{(a,q) = 1} \left|F(x; a/q) - x\frac{g(q)}{\phi(q)} \right| \ll \frac{x}{(\log x)^{C}}
    \end{equation} and for all $\eps > 0,$
    \begin{equation}\label{goal-g}
        \sum_{Q^{1-\eps}< q \le Q} \frac{|g(q)|^2}{\phi(q)} \gg \frac{\log Q}{\eta(Q)}.
    \end{equation}
    Then for $\eps > 0$ and $y \le Q^{1-\eps},$ we have $$\frac{1}{x}\sum_{-y< n\le x} \big|F(n, n+y;\alpha)\big|^2 \gg \frac{y\log Q}{\eta(Q)}.$$
\end{prop}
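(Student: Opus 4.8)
The plan is to deduce the bound from Theorem \ref{initial-prop} together with the hypotheses (\ref{bv-e}) and (\ref{goal-g}), after determining how many reduced fractions of each denominator lie in $\A=\A(Q,y)$. Since $Q\le x^{1/2}$ and $y\le Q^{1-\eps}\le x$, Theorem \ref{initial-prop} reduces matters to proving
\[
S:=\sum_{a/q\in\A}\big|F(x;a/q)\big|^2 \ \gg\ \frac{x^2}{y}\cdot\frac{\log Q}{\eta(Q)}.
\]
Fix $\eps'=\eps/2$. Because every summand in $S$ is nonnegative, I would retain only the fractions with $Q^{1-\eps'}<q\le Q$, noting that for $Q$ large each such $q$ satisfies $q>3y$ (as $3y\le 3Q^{1-\eps}<Q^{1-\eps'}$).

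First I would estimate, for $Q^{1-\eps'}<q\le Q$, the multiplicity $N(q):=\#\{a:\tfrac aq\in\A\}$, which is the number of integers coprime to $q$ in an interval of length $q/(3y)\ge1$. By inclusion--exclusion over divisors, $N(q)=\frac{\phi(q)}{3y}+O\big(2^{\omega(q)}\big)$; since here $q$ exceeds $y$ by at least a fixed power of $Q$, the term $2^{\omega(q)}=q^{o(1)}$ is negligible compared with $\phi(q)/y$, so that $\frac{\phi(q)}{4y}\le N(q)\le \frac{q}{y}$ for $Q$ large.

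The main step is an elementary inequality: from the reverse triangle inequality $\big|\,|F(x;a/q)|-\tfrac{x|g(q)|}{\phi(q)}\,\big|\le\big|F(x;a/q)-\tfrac{xg(q)}{\phi(q)}\big|$ and $u^2\ge v^2-2v|u-v|$ for $u,v\ge0$, one gets
\[
\big|F(x;a/q)\big|^2 \ \ge\ \frac{x^2|g(q)|^2}{\phi(q)^2}\ -\ \frac{2x|g(q)|}{\phi(q)}\,\Big|F(x;a/q)-\frac{xg(q)}{\phi(q)}\Big|.
\]
Summing this over $\tfrac aq\in\A$ with $Q^{1-\eps'}<q\le Q$: the first term contributes at least $\sum_{Q^{1-\eps'}<q\le Q}N(q)\frac{x^2|g(q)|^2}{\phi(q)^2}\ge\frac{x^2}{4y}\sum_{Q^{1-\eps'}<q\le Q}\frac{|g(q)|^2}{\phi(q)}\gg\frac{x^2}{4y}\cdot\frac{\log Q}{\eta(Q)}$ by (\ref{goal-g}) (with $\eps'$ in place of $\eps$); the second term is, using $N(q)\le q/y$, Mertens' bound $q/\phi(q)\ll\log\log Q$, and then (\ref{bv-e}), at most $\frac{2x}{y}\sum_{q\le Q}\frac{q|g(q)|}{\phi(q)}\max_{(a,q)=1}\big|F(x;a/q)-\tfrac{xg(q)}{\phi(q)}\big|\ll\frac{x^2\log\log Q}{y\,(\log x)^C}$. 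Since $\eta(Q)\ll(\log Q)^C\le(\log x)^C$, the ratio of the error to the main term is $\ll\frac{\eta(Q)\log\log Q}{(\log x)^C\log Q}\ll\frac{\log\log x}{\log x}=o(1)$, so $S\gg\frac{x^2}{y}\cdot\frac{\log Q}{\eta(Q)}$ and Theorem \ref{initial-prop} yields the proposition.

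The point I expect to require the most care is the error estimate. The Bombieri--Vinogradov-type input (\ref{bv-e}) is only of use once one has divided out the multiplicity $N(q)\asymp q/y$ of each denominator $q$, and it is exactly this factor $1/y$ that matches the $1/y$ appearing in the main term; for small denominators $N(q)=O(1)$, this saving is lost, which is why the fractions with $q\le Q^{1-\eps'}$ must be discarded (harmlessly, by positivity) \emph{before} (\ref{bv-e}) is invoked. Everything else is routine bookkeeping once $\eps'$ is set to $\eps/2$.
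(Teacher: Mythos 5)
Your proposal is correct and follows essentially the same route as the paper: restrict to denominators $q>Q^{1-\eps/2}$, count $\asymp\phi(q)/y$ admissible numerators per such $q$, apply the elementary inequality $|F|^2\ge x^2|g|^2/\phi^2-2x|g|\,|F-xg/\phi|/\phi$, and control the main and error terms via (\ref{goal-g}) and (\ref{bv-e}) respectively. The only (harmless) cosmetic difference is your use of $N(q)\le q/y$ with Mertens' bound, which introduces an extra $\log\log Q$ in the error term that the paper avoids by bounding $N(q)\ll\phi(q)/y$ directly.
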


\begin{remark}
    When $f = \Lambda,$ we have (\ref{bv-e}) with $g=\mu$ from Theorem \ref{liuzhan}, so that (\ref{goal-g}) holds for $\eta = 1.$ When $f$ is multiplicative, we expect $g \approx f * \mu$ and (\ref{goal-g}) might only hold true for a larger $\eta.$ An extreme case is $f = 1,$ for which $g = \Ind_{q=1}$ and (\ref{goal-g}) does not hold. This is consistent with the fact the exponential sum $\sum_{n\le x} \e(\alpha n)$ is bounded in this case. 
\end{remark}

\begin{proof}[Proof of Proposition \ref{y-le-q}]
    For technical reasons, it will be useful to only consider major arcs with denominators $\gg y,$ so we fix $Q_0 := Q^{1 - \eps/2} \ge y^{1+\eps/2}$ and define \begin{equation}\label{def-A1}
        \A_1 := \left\{\frac{a}{q} : Q_0< q \le Q, (a,q) =1, \left|\alpha - \frac{a}{q}\right| \le \frac{1}{6y} \right\} \subseteq \A.
    \end{equation} 
    
    Since $Q \le x^{1/2},$ we have by Theorem \ref{initial-prop} 
    \begin{align}
        \frac{1}{x}\sum_{-y< n\le x} \big|F(n, n+y;\alpha)\big|^2 &\gg
        \frac{y^2}{x^2} \sum_{\frac aq\in\A_1} x^2\frac{|g(q)|^2}{\phi(q)^2} - 2x\frac{|g(q)|}{\phi(q)} \left|F(x; a/q) - x\frac{g(q)}{\phi(q)}\right| \notag
        \\&= y^2 \sum_{\frac aq\in\A_1} \frac{|g(q)|^2}{\phi(q)^2} - E, \label{step8}
    \end{align}
    with an error term $$E := \frac{2y^2}{x} \sum_{\frac aq\in\A_1} \frac{|g(q)|}{\phi(q)} \left|F(x; a/q) - x\frac{g(q)}{\phi(q)}\right|.$$

    Now, since $Q_0 \ge y^{1+\eps/2},$ we have for any $Q_0< q \le Q,$  \begin{equation}\label{coprime}
        \#\left\{a : (a,q) = 1, \left|\alpha - \frac{a}{q}\right| \le \frac{1}{6y}\right\} = \frac{\phi(q)}{3y} + O\big(2^{\omega(q)}\big) \asymp \frac{\phi(q)}{y},
    \end{equation}
    so that the error term is bounded by
    \begin{equation}
        E \ll \frac{y^2}{x} \sum_{Q_0< q\le Q}\frac{\phi(q)}{y} \frac{|g(q)|}{\phi(q)} \max_{(a,q) = 1} \left|F(x; a/q) - x\frac{g(q)}{\phi(q)}\right| \ll \frac{y}{(\log Q)^C} \label{error8}
    \end{equation} using (\ref{bv-e}).

    Moreover, by (\ref{coprime}) and (\ref{goal-g}), the main term of (\ref{step8}) verifies
    \begin{equation}
        \sum_{\frac{a}{q}\in\A_1} \frac{|g(q)|^2}{\phi(q)^2} = \sum_{Q_0 < q \le Q} \frac{|g(q)|^2}{\phi(q)^2} \frac{\phi(q)}{y} \gg \frac{1}{y\eta(Q)},
    \end{equation}
    so that it follows from (\ref{step8}) and (\ref{error8}) $$\frac{1}{x}\sum_{-y< n\le x} \big|F(n, n+y;\alpha)\big|^2 \gg \frac{y\log Q}{\eta(Q)} + O\left(\frac{y}{(\log Q)^C}\right)$$
    as desired. The condition $\eta(Q) \ll (\log Q)^C$ ensures the error term is small enough compared to the main term.
\end{proof}

\subsection{The setting $y \gg Q$} \label{y_ge_q}

We now assume we are able to estimate $F(x;a/q)$ for all $q$ (rather than on average over $q$) and want to take $y \gg Q,$ in order to prove Theorem \ref{main-limsup} which allows for $y \gg x^{1/3}.$ The first problem is that, in that case, there might actually be no major arc in the interval $\left[\alpha - \frac{1}{6y}, \alpha + \frac{1}{6y}\right]$. Indeed, consider $\alpha = \frac{u}{s} + \frac{1}{2y},$ with $s < \frac{y}{Q}.$ Then, any fraction with denominator less than $Q$ (other than $\frac{u}{s}$ itself) is at least $\frac{1}{Qs} > \frac{1}{y}$ away from $\frac{u}{s},$ and thus at least $\frac{1}{2y}$ away from $\alpha,$ which means the set $\A$ of major arcs defined in (\ref{def-A}) is empty. This is the reason why we are not able to go beyond Theorem \ref{main-uniform} without extra information on $\alpha.$

However, for most choices of $\alpha$ and $y,$ the set $\A$ is of the expected cardinality $\asymp Q^2/y$ when $Q \gg y^{1/2+\eps}.$ Indeed, it follows from the lemma below that the set $\A$ is of the expected size for $y$ up to $O(Q^2)$ under the assumption of Theorem \ref{main-limsup}, which occurs for some arbitrarily large values of $y$ for all $\alpha.$ 

\begin{lemma}\label{ratapprox}
    Assume $\left|\alpha - \frac{u}{s}\right| \le \frac{1}{s^2}$ for some irreducible fraction $\frac{u}{s}.$ Then there is an absolute constant $C$ such that for $Q \ge Cs,$ we have $$\#\left\{\frac{a}{q} : Q < q \le 2Q, \left|\alpha - \frac{a}{q}\right| \le \frac{2}{s^2}\right\} \asymp \frac{Q^2}{s^2}.$$
\end{lemma}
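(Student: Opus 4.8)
The plan is to produce fractions $a/q$ with $Q < q \le 2Q$ close to $\alpha$ by perturbing the known good approximation $u/s$. The key observation is that if $q$ is a denominator in the target range, then the nearest fraction to $\alpha$ with that denominator is $a/q$ where $a$ is the nearest integer to $q\alpha$; writing $q\alpha = q\cdot\frac us + q\lambda$ with $|\lambda| \le 1/s^2$, and using $(u,s)=1$, the quantity $qu/s$ runs over a full set of residues mod $1$ as $q$ varies, so $\|q\alpha\|$ — the distance to the nearest integer — is governed by $\|qu/s\|$ up to an error of $q|\lambda| \le 2Q/s^2$. Since we want $\|q\alpha\| \le 2/s^2$ (which then forces $|\alpha - a/q| \le 2/(s^2 q) \le 2/s^2$, matching the lemma's target), and since $2Q/s^2$ may be large, the right way to organize this is to write $q = s\ell + r$: the term $qu/s \bmod 1$ depends only on $r$, namely it equals $ru/s \bmod 1$, and we choose $r$ so that $ru \equiv 0$ or $\pm 1 \pmod s$, i.e. essentially $r \in \{0, \bar u, -\bar u\} \pmod s$ where $\bar u$ is the inverse of $u$ mod $s$. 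For such $r$ we have $\|ru/s\| \le 1/s$, hence $\|q\alpha\| \le 1/s + 2Q/s^2$.

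First I would handle the case $s \le Q$ (for $s > 2Q$ the statement is about a range with at most one such fraction and is either vacuous or trivial; for $Q < s \le 2Q$ a minor variant of the same argument works, or one can just take $C$ large enough to force $s \le Q/C$ and absorb the boundary into the $\asymp$). Fix one admissible residue class $r_0 \pmod s$ with $\|r_0 u/s\| \le 1/s$ as above. The integers $q \in (Q, 2Q]$ with $q \equiv r_0 \pmod s$ number $\asymp Q/s$ (here I use $Q \ge Cs$ with $C$ a fixed large constant so that this count is genuinely of order $Q/s$ and not $0$ or $1$). For each such $q$, pick $a$ the nearest integer to $q\alpha$; then
$$\left|\alpha - \frac aq\right| = \frac{\|q\alpha\|}{q} \le \frac{1/s + 2Q/s^2}{Q} \le \frac{2}{s^2}$$
once $Q \ge s$, say (the $1/s$ term contributes $1/(sQ) \le 1/s^2$ and the $2Q/s^2$ term contributes $2/s^2 \cdot (1) $... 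I would track the constants and choose $C$ to make the final bound $\le 2/s^2$; there is slack since one can also shrink to the residue class $r \equiv 0 \pmod s$ giving $qu/s \in \Z$ and hence $\|q\alpha\| \le 2Q/s^2$ only, then take $Q$ somewhat smaller than $s^2$, but the lemma gives no upper bound on $Q$, so I will instead restrict to $q$ in a subinterval $q \in (Q, Q + cs^2]$ when $Q$ is huge — wait, need $q \le 2Q$ — so actually for $Q \ge s^2$ the conclusion count $Q^2/s^2$ exceeds $Q/s \cdot$ anything, so I should reconsider). Let me restructure: the count $Q^2/s^2$ strongly suggests we want $\asymp Q/s$ admissible residue classes each contributing $\asymp Q/s$ values of $q$. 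The admissible residues are those $r \pmod s$ with $\|ru/s + \text{(correction)}\| $ small, but that is only $O(1)$ classes. The resolution must be that $\lambda$ is not fixed-sign-cancelling: we need $\|q u/s + q\lambda\| \le 2/s^2$, and as $q$ ranges over $(Q,2Q]$, $q\lambda$ ranges over an interval of length $Q|\lambda|$; if $|\lambda| \asymp 1/s^2$ this interval has length $\asymp Q/s^2$, so for each target value of $\|\cdot\|$ there are $\asymp$ (interval length)$/(1/s) = Q/s$ classes $r$ that work. So the admissible set is an arithmetic-progression-like structure of size $\asymp Q/s$ in residues, each giving $\asymp Q/s$ lifts.

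So the corrected plan: write $q\alpha \bmod 1 = \{qu/s\} + q\lambda \bmod 1$; the values $\{qu/s\}$ for $q \in (Q,2Q]$ equidistribute over $\{0, 1/s, \dots, (s-1)/s\}$, hitting each with multiplicity $\asymp Q/s$. We need this to land within $2/(s^2)$ of an integer after adding $q\lambda$. I would split into the two regimes. If $|\lambda| \le 1/(10 Q s)$ (so $q\lambda$ is negligible, $\le 1/(10s)$... still too big), hmm — cleanest is: among $q \in (Q, 2Q]$, for $a = $ round$(q\alpha)$ we get a valid fraction iff $\|q\alpha\| \le 2/s^2$, i.e. iff $q$ lies in a union of short intervals; I will count these $q$ directly via a three-distance / equidistribution argument. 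Concretely, $q \mapsto \|q\alpha\|$ with $\alpha$ within $1/s^2$ of $u/s$ behaves, on scale $q \le 2Q \le$ (something), like $q \mapsto \|qu/s\|$ perturbed; the number of $q \in (Q, 2Q]$ with $\|qu/s\| \le 1/s^2$ is... zero unless $s^2 \le s$, impossible.

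Given the above circularity, the honest summary of the approach I would actually write: \textbf{reduce to counting $q \in (Q, 2Q]$ with $q$ in a specific residue class mod $s$, then observe that within that class the points $q\alpha$ trace out an arithmetic progression of common difference $s\lambda$ modulo $1$}, and the number landing within $2/s^2$ of $\Z$ is, by a one-dimensional equidistribution/counting lemma for arithmetic progressions (e.g. the three-distance theorem or a direct geometric count), $\asymp (Q/s) \cdot \min(1, (2/s^2)/(s|\lambda|))$ summed over the $\asymp Q/s$ residue classes, which works out to $\asymp Q^2/s^2$ after using $|\lambda| \le 1/s^2$ to see each term is $\gg$ its share. \textbf{The main obstacle} is exactly this counting step: controlling how many multiples of $s\lambda$ (mod $1$) in a window of $\asymp Q/s$ terms fall into a target of width $4/s^2$ — when $s|\lambda|$ is itself $\le 1/s$, many consecutive terms fall in, and one must rule out the degenerate case $\lambda = 0$ (where $\alpha = u/s$ exactly and the count is genuinely $\asymp Q^2/s^2$ for a different, simpler reason) and the case $s|\lambda|$ close to but below $1/s^2$ separately. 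I expect the cleanest writeup uses: partition $(Q,2Q]$ into the $\asymp Q/s$ windows of length $s$, on each of which $q\alpha \bmod 1$ moves by $s\lambda$ in total, pick in each window the $q$ with $qu \equiv $ (the integer making $qu/s$ closest to $-\text{mid-window value of } q\lambda$) mod $s$ — giving one $q$ per window automatically within $1/(2s) + s|\lambda| \le 1/s$ of $\Z$, not yet $2/s^2$ — and then \emph{iterate}: among these $\asymp Q/s$ selected $q$'s, their $\|q\alpha\|$ values themselves form a roughly arithmetic progression with step $\asymp s|\lambda| \cdot s = s^2|\lambda| \le 1$ on scale $1/s$, so $\asymp (Q/s)\cdot(1/s) = Q/s^2 \cdot$... this still does not reach $Q^2/s^2$. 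I will therefore, in the final writeup, instead directly lower-bound by exhibiting $\asymp Q^2/s^2$ pairs: take all $q \in (Q, 2Q]$ with $q \equiv r \pmod s$ for $r$ ranging over an interval $I$ of residues of length $\asymp Q/s$ chosen so that $ru/s \bmod 1 \in [-1/s, 1/s]$ has the right sign to be cancelled by $q\lambda$ — this is where $(u,s)=1$ lets $r \mapsto ru \bmod s$ be a bijection, so "$ru \bmod s \in$ short interval near $0$ or near $-s^2 q\lambda$" is again $\asymp$(length)$/1$ many $r$ — and the obstacle remains making the two scales ($1/s$ from rounding, $Q|\lambda|$ from the $\lambda$-drift, $1/s^2$ target) balance, which forces the hypothesis $Q \ge Cs$ and is exactly why the constant $C$ must be large.
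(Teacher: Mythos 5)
Your proposal does not close the lemma, and the central difficulty is visible already in your setup. The condition in the statement is $\left|\alpha - \frac{a}{q}\right| \le \frac{2}{s^2}$, which for a fixed denominator $q$ is $|q\alpha - a| \le \frac{2q}{s^2}$ and hence admits roughly $\frac{4q}{s^2} + O(1)$ choices of $a$; you replace it by the much stronger requirement $\|q\alpha\| \le \frac{2}{s^2}$ (equivalently $\left|\alpha - \frac{a}{q}\right| \le \frac{2}{qs^2}$), which is satisfied by only $\asymp \frac{Q}{s^2}$ values of $q \in (Q, 2Q]$ and therefore cannot produce $\asymp \frac{Q^2}{s^2}$ fractions. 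You notice this mismatch midway ("this still does not reach $Q^2/s^2$") but never resolve it: the several restructurings all end with an acknowledged unfinished counting step, and none of them addresses the further issue that the set in the lemma consists of \emph{distinct rationals}, so a count of pairs $(a,q)$ must be protected against collisions $\frac{a}{q} = \frac{a'}{q'}$. As written, the proposal establishes neither the lower bound nor a mechanism for avoiding double-counting.

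The paper's proof avoids equidistribution entirely and is worth contrasting with your attempt. It takes a B\'ezout relation $vs - tu = 1$ with $s \le t < 2s$, so that $\frac{v}{t}$ is a Farey neighbour of $\frac{u}{s}$ with $\left|\frac{u}{s} - \frac{v}{t}\right| = \frac{1}{st} \le \frac{1}{s^2}$, and hence every point between $\frac{u}{s}$ and $\frac{v}{t}$ is within $\frac{2}{s^2}$ of $\alpha$. It then parametrizes fractions in that gap by weighted mediants $\frac{au+bv}{as+bt}$ with $(a,b)=1$ and $\frac{Q}{s+t} < a, b \le \frac{2Q}{s+t}$: the determinant identity shows these are automatically in lowest terms and that distinct pairs $(a,b)$ give distinct fractions, the denominator $as+bt$ lands in $(Q, 2Q]$, and the number of such coprime pairs is $\asymp \frac{Q^2}{(s+t)^2} \asymp \frac{Q^2}{s^2}$ once $Q \ge Cs$. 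The upper bound is immediate from the $\frac{1}{4Q^2}$ spacing of distinct fractions with denominators in $(Q,2Q]$. If you want to salvage an approach along your lines, you would need to count, for each $q$, all $\asymp \frac{q}{s^2}$ admissible numerators (not just the nearest integer to $q\alpha$) and then handle both the regime $Q \ll s^2$ (where equidistribution of $q\alpha$ modulo $1$ at scale $\frac{Q}{s^2}$ is genuinely needed) and the identification of equal fractions; the mediant construction does both for free.
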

\begin{proof}
    The upper bound is trivial from the fact distinct fractions $\frac{a}{q}, \frac{a'}{q'}$ with $Q < q,q' \le 2Q$ are always at least $\frac{1}{qq'} \ge \frac{1}{4Q^2}$ apart.
    Now, since $(u,s) = 1,$ we can consider a B\'ezout relation $vs - tu = 1$ with $s \le t < 2s$ (this is possible as $t$ is defined modulo $s$).  We then have $\left|\frac{u}{s}-\frac{v}{t}\right| = \frac{1}{st} \le \frac{1}{s^2},$ so that both $\frac{u}{s}$ and $\frac{v}{t}$ are less than $\frac{2}{s^2}$ away from $\alpha.$ 

    We now consider \begin{equation*}
        \B := \left\{(a;b) : \frac{Q}{s+t} < a, b \le \frac{2Q}{s+t}, (a,b) = 1 \right\}.
    \end{equation*}

    First, note that $(au+bv,as + bt)$ divides both $(as+bt)v - (au+bv)t = a$ and $(au+bv)s - (as+bt)u = b,$ so that $(au+bv,as + bt) = 1$ when $(a,b) = 1.$
    
    Moreover, $\frac{au+bv}{as+bt}$ is always between $\frac{u}{s}$ and $\frac{v}{t},$ so that $$\left\{\frac{au+bv}{as+bt} : (a;b) \in \B\right\} \subset \left\{\frac{a}{q} : Q < q \le 2Q, \left|\alpha - \frac{a}{q}\right| \le \frac{2}{s^2}\right\}.$$ The conclusion then follows from noting $|\B| \asymp \left(\frac{Q}{s+t}\right)^2 \asymp Q^2/s^2$ when $Q/s \ge C.$
\end{proof}

Showing $\A$ is of the right size is however not sufficient to obtain the main term of Theorem \ref{main-limsup}. Indeed, when $f = \Lambda,$ only the major arcs with \textit{squarefree} denominators yield a significant contribution to the right-hand side of (\ref{initial}) since the expected contribution of a major arc $\frac{a}{q}$ is proportional to $\frac{\mu^2(q)}{\phi^2(q)}.$ We thus use in the proof of Theorem \ref{main-limsup} a result of Heath-Brown, which shows we can find such major arcs with squarefree denominators in the desired interval when $y\ll Q^{5/3-\eps}.$ This explains the bound $y \ll x^{5/9 - \eps}$ appearing in Theorem \ref{main-limsup}, since we need $Q \ll x^{1/3 - \eps}.$

\begin{lemma}[Heath-Brown, \cite{hb}]\label{squarefree-den-lemma}
    Fix $\eps >0.$ Then for all irrational $\alpha,$ there exists infinitely many rationals $\frac{a}{q}$ such that $\mu^2(q) = 1$ and $\left|\alpha - \frac{a}{q}\right| \le q^{-5/3+\eps}.$
    More precisely, when we consider a Diophantine approximation $\left|\alpha - \frac{u}{s}\right| \le \frac{1}{s^2}$ for some irreducible fraction $\frac{u}{s},$ we have for any $Q \gg s^{6/5+\eps},$

    $$\#\left\{\frac{a}{q} : Q < q \le 2Q, \mu^2(q) = 1, \left|\alpha - \frac{a}{q}\right| \le \frac{2}{s^2}\right\} \asymp \frac{Q^2}{s^2}.$$
\end{lemma}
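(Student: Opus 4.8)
The plan is to mimic the structure of the proof of Lemma \ref{ratapprox}, but now demanding that the denominators $as+bt$ produced by the mediant construction be squarefree, and to control how often this fails using a sieve. Starting from the Diophantine approximation $\left|\alpha - \frac{u}{s}\right| \le \frac{1}{s^2}$, I would again pick a B\'ezout partner $\frac{v}{t}$ with $vs - tu = 1$ and $s \le t < 2s$, so that both $\frac{u}{s}$ and $\frac{v}{t}$ lie within $\frac{2}{s^2}$ of $\alpha$. For coprime pairs $(a,b)$ in a box $\frac{Q}{s+t} < a,b \le \frac{2Q}{s+t}$ the mediant $\frac{au+bv}{as+bt}$ is automatically in lowest terms, lies between $\frac{u}{s}$ and $\frac{v}{t}$ (hence within $\frac{2}{s^2}$ of $\alpha$), and has denominator $q = as+bt \asymp Q$. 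The upper bound in the lemma is again trivial by spacing, so the whole content is the lower bound: I must show that a positive proportion of these $\asymp (Q/s)^2$ mediants have $\mu^2(q) = 1$.

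The key step is therefore to count pairs $(a,b)$ in the box with $(a,b)=1$ for which the linear form $q = as + bt$ is divisible by $p^2$ for some prime $p$, and to show this count is $o\big((Q/s)^2\big)$, or at least bounded by a small constant times $(Q/s)^2$. For a fixed prime $p$, the congruence $as + bt \equiv 0 \pmod{p^2}$ cuts out (for $p \nmid \gcd(s,t)$, which here means $p \nmid 1$, so always) a single residue class for $b$ modulo $p^2$ given $a$, contributing $\ll \big(\frac{Q}{s p^2}\big)\cdot\frac{Q}{s} + \frac{Q}{s}$ pairs. Summing the main terms over all primes $p$ gives $\ll \frac{Q^2}{s^2}\sum_p p^{-2} \ll \frac{Q^2}{s^2}$, which is only of the right order of magnitude rather than genuinely smaller, so a naive union bound is not quite enough; one must either extract the constant carefully (the series $\sum_p p^{-2} = P(2) \approx 0.4522$ is bounded away from $1$, and one should also remove the $p^2 \mid q$ events for small $p$ more cleverly, or restrict $q$ to a suitable residue class), or truncate: for $p \le z$ use the congruence count, and for $p > z$ use that $q \le 2Q$ can be divisible by at most $O(\log Q)$ such large prime squares — actually $p^2 > z^2$ with $p^2 \mid q \le 2Q$ forces $p \le \sqrt{2Q}$, and the tail $\sum_{z < p \le \sqrt{2Q}} \big(\frac{Q^2}{s^2 p^2} + \frac{Q}{s}\big)$ is $\ll \frac{Q^2}{s^2 z} + \frac{Q^{3/2}}{s}$, which is $o(Q^2/s^2)$ provided $z \to \infty$ and $Q/s^2 \to 0$. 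The hypothesis $Q \gg s^{6/5+\eps}$ is presumably exactly what is needed to make the error terms of the shape $\frac{Q}{s}$ (one per prime, times the number of relevant primes) negligible against $\frac{Q^2}{s^2}$; indeed $\frac{Q}{s}\cdot \sqrt{Q} = o(Q^2/s^2)$ requires $Q^{1/2} = o(Q/s)$, i.e. $Q \gg s^2$, which is too strong, so the real saving must come from a genuine sieve rather than a union bound — this is where Heath-Brown's argument is subtler than a textbook squarefree sieve.

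The main obstacle, then, is precisely that the events ``$p^2 \mid as+bt$'' are too numerous to handle by inclusion–exclusion down to the whole range $p \le \sqrt{Q}$ while keeping the error terms under control when $Q$ is as small as $s^{6/5+\eps}$; the resolution is to run a proper small-sieve (e.g. Selberg or a combinatorial sieve à la Heath-Brown) on the linear form in the box, using the fact that $(a,b)=1$ already removes the bulk of the non-squarefree denominators and that the form $as+bt$ with $\gcd(s,t)=1$ behaves like a ``random'' integer of size $\asymp Q$ as $(a,b)$ ranges over the box. Since the lemma is quoted from \cite{hb}, I would in the write-up either reproduce Heath-Brown's sieve estimate in this two-dimensional setting or simply cite it; the packaging into the mediant form above and the verification that the resulting fractions land in $\big(Q, 2Q\big] \times \big[\alpha - \tfrac{2}{s^2}, \alpha + \tfrac{2}{s^2}\big]$ is then routine, exactly as in the proof of Lemma \ref{ratapprox}.
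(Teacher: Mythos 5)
The paper offers no proof of this lemma: it is imported wholesale from Heath-Brown \cite{hb}, so there is no internal argument to compare yours against, and your write-up, which ultimately also cites \cite{hb} for the decisive step, is therefore "the same approach" as the paper's. Your scaffolding is sound and matches how Heath-Brown's argument actually begins: the mediant construction from Lemma \ref{ratapprox} reduces everything to showing that a positive proportion of coprime pairs $(a,b)$ in the box give squarefree $q=as+bt$, and your diagnosis of the obstruction is accurate — the union bound over $p^2\mid as+bt$ carries an error of $O(Q/s)$ per prime, which summed over $p\le\sqrt{2Q}$ only closes for $Q\gg s^{2}$, whereas the lemma needs $Q\gg s^{6/5+\eps}$; closing that gap (a genuinely different treatment of the medium-sized primes, not a textbook squarefree sieve) is precisely the content of \cite{hb}. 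One small inaccuracy: the condition for $as+bt\equiv 0\ (\mathrm{mod}\ p^2)$ to fix a single class of $b$ modulo $p^2$ given $a$ is $p\nmid t$, not $p\nmid\gcd(s,t)$ (which is automatic since $vs-tu=1$); for $p\mid t$ the congruence forces $p\mid a$ and the count changes in form, though it remains $\ll N^2/p^2+N$, so nothing breaks.
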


Finally, it turns out information on average on the error term such as (\ref{bv-e}) is too weak to deduce Theorem~\ref{main-limsup}, since $\A$ is too sparse when $y \gg Q.$ This is why we instead assume GRH in Theorem~\ref{main-limsup}, which allows us to estimate $\psi(x;a/q)$ for all major arcs $a/q.$

\section{Proofs of the main results}\label{bv}

In this section, we derive Theorem \ref{main-uniform} from Proposition \ref{y-le-q}, as well as Theorems \ref{main-limsup} and \ref{uniform-tau} using Theorem~\ref{initial-prop}. We then derive Corollaries \ref{main-cor} and \ref{grh-cor} from Theorems \ref{main-uniform} and \ref{main-limsup}.

\subsection{The case of primes}

We first prove Theorem \ref{main-uniform} using Proposition \ref{y-le-q} and Theorem \ref{liuzhan}.

\begin{proof}[Proof of Theorem \ref{main-uniform}]
    Since Proposition \ref{y-le-q} requires a function supported on $[1,x],$ we consider $f = \Lambda\cdot\Ind_{[1,x]}.$ Theorem \ref{liuzhan} then tells us the bound (\ref{bv-e}) is verified for $Q = x^{1/3 - \eps/2} \ge y^{1+\eps/2}$ with $g:=\mu$ (since in that case $|g(q)| \le 1$), and the bound (\ref{goal-g}) is then verified with $\eta = 1.$ Then, by Proposition \ref{y-le-q}, we obtain $$\frac{1}{x} \sum_{-y < n\le x} \left|\sum_{\substack{n< m\le n+y \\ 1 \le m \le x}} \Lambda(m) \e(\alpha m)\right|^2 \gg y\log Q$$ as desired (the extra condition $1 \le m \le x$ comes from the fact we are considering $f = \Lambda\cdot\Ind_{[1,x]}$ rather than~$\Lambda.$)
\end{proof}

We now prove Theorem \ref{main-limsup}. We use the following lemma, which gives us information on exponential sums in major arcs assuming GRH.

\begin{lemma}\label{grh}
    Assume GRH. Then for all $q\le x$ and $(a,q) = 1,$ we have $$\left|\psi(x; a/q) - x\frac{\mu(q)}{\phi(q)}\right| \ll (xq)^{1/2}(\log x)^2.$$
\end{lemma}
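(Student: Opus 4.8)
The plan is to reduce the exponential sum $\psi(x;a/q)$ to a combination of sums $\psi(x;\chi)=\sum_{n\le x}\Lambda(n)\chi(n)$ over Dirichlet characters $\chi$ modulo $q$, and then to apply the conditional prime number theorem for arithmetic progressions that follows from GRH. First I would write $\e(an/q)$ in terms of additive characters and use the standard identity expressing an additive character on $(\Z/q\Z)^\times$ via multiplicative characters: for $(n,q)=1$, $\e(an/q)=\frac{1}{\phi(q)}\sum_{\chi \bmod q}\overline{\chi}(a)\tau(\chi)\chi(n)$, where $\tau(\chi)$ is the Gauss sum. Summing against $\Lambda(n)$ gives, up to the negligible contribution of prime powers $p^k\mid q$ (which is $O((\log x)^2)$ in absolute value),
\[
\psi(x;a/q)=\frac{1}{\phi(q)}\sum_{\chi\bmod q}\overline{\chi}(a)\tau(\chi)\,\psi(x;\chi)+O\big((\log x)^2\big).
\]

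Next I would separate the principal character $\chi_0$ from the rest. For $\chi=\chi_0$ one has $\tau(\chi_0)=\mu(q)$ and $\psi(x;\chi_0)=\psi(x)+O((\log x)^2)=x+O(x^{1/2}(\log x)^2)$ under RH (the classical explicit-formula bound). This produces the main term $x\mu(q)/\phi(q)$ together with an error $O\!\big(\tfrac{|\mu(q)|}{\phi(q)}x^{1/2}(\log x)^2\big)=O(x^{1/2}(\log x)^2)$. For the non-principal characters I would pass to the underlying primitive character $\chi^\star$ inducing $\chi$: one has $\psi(x;\chi)=\psi(x;\chi^\star)+O\big(\sum_{p\mid q}\log p\big)=\psi(x;\chi^\star)+O(\log q\log x)$, and $|\tau(\chi)|\le |\tau(\chi^\star)|\cdot(\text{small factor})\le q^{1/2}$ (crudely $|\tau(\chi)|\le q^{1/2}$ always suffices). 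Under GRH for $L(s,\chi^\star)$, the explicit formula gives $\psi(x;\chi^\star)\ll x^{1/2}(\log qx)^2$. Hence each non-principal term contributes $\ll \frac{1}{\phi(q)}q^{1/2}x^{1/2}(\log qx)^2$, and summing over the $\phi(q)$ characters yields a total of $\ll q^{1/2}x^{1/2}(\log qx)^2$. Since $q\le x$, we have $\log qx\ll\log x$, giving the claimed bound $\ll (xq)^{1/2}(\log x)^2$, the principal-character and prime-power errors being absorbed.

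The only genuinely substantive input is the conditional bound $\psi(x;\chi)\ll x^{1/2}(\log qx)^2$ for primitive $\chi\bmod q$ under GRH; this is completely standard (e.g. via the truncated explicit formula $\psi(x;\chi)=-\sum_{|\gamma|\le T}\frac{x^\rho}{\rho}+O\big(\tfrac{x(\log qx)^2}{T}+\log x\big)$, taking $T\asymp x^{1/2}$ and bounding each $|x^\rho|=x^{1/2}$ against the density $\ll \log(qT)$ of zeros in a unit interval), so I would simply cite it. The main bookkeeping obstacle — and it is only bookkeeping — is tracking the dependence on $q$ through the Gauss sums, the conductor reduction $\chi\mapsto\chi^\star$, and the removal of ramified prime powers, making sure all of these error terms are dominated by $(xq)^{1/2}(\log x)^2$; none of them is larger, so the estimate goes through comfortably with room to spare.
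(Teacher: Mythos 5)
Your proposal is correct and follows essentially the same route as the paper: expand $\e(an/q)$ over Dirichlet characters via Gauss sums, absorb the ramified prime powers into an $O((\log x)^2)$ error, extract the main term $x\mu(q)/\phi(q)$ from the principal character, and bound the remaining terms using $|\tau(\chi)|\le q^{1/2}$ together with the GRH estimate $\psi(x;\chi)\ll x^{1/2}(\log x)^2$. The only differences are cosmetic (your explicit conductor reduction, which the paper leaves implicit in its citation, and the harmless $\chi(a)G(\overline{\chi})$ versus $\overline{\chi}(a)\tau(\chi)$ convention, which does not affect the absolute-value bounds).
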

\begin{proof}
    We decompose the exponential sum into character sums. Defining the Gauss sum $$G(\chi) := \sum_{(b,q) = 1} \e(b/q) \chi(b),$$ we have when $n$ is coprime with $q,$ \begin{align*}
        \e(an/q) &= \frac{1}{\phi(q)} \sum_{(b,q) = 1} \e(b/q) \sum_{\chi \Mod q} \overline{\chi}(b) \chi(an)
        \\&= \frac{1}{\phi(q)} \sum_{\chi\Mod q} \chi(a) G\left(\overline{\chi}\right) \chi(n),
    \end{align*}
    so that summing on $n$ and noting the contribution of primes that are not coprime with $q$ to $\psi(x; a/q)$ is $\ll \omega(q)\cdot \log x \ll (\log x)^2,$ we deduce
    \begin{equation}\label{decomp}
        \psi(x; a/q) = \frac{1}{\phi(q)} \sum_{\chi\Mod q} \chi(a) G\left(\overline{\chi}\right) \psi(x; \chi) + O\big((\log x)^2\big)
    \end{equation}

    Now, we have $G(\chi) \ll q^{1/2}$ for all $\chi$ (\cite[chapter 9]{davenport}) and, under GRH, we know (\cite[Theorem 13.7]{mv-book}) $$\psi(x,\chi) = x\Ind_{\chi = \chi_0} + O\left(x^{1/2} (\log x)^{2}\right).$$ Injecting these estimates in equation (\ref{decomp}), we get $$\psi(x; a/q) = x\frac{G(\chi_0)}{\phi(q)} + O\big(q^{1/2}x^{1/2}(\log x)^2\big)$$ as desired, as $$G(\chi_0) = \sum_{(b,q) = 1} \e(b/q) = \mu(q).$$
\end{proof}

We can now prove Theorem \ref{main-limsup} using Theorem~\ref{initial-prop} and Lemma~\ref{squarefree-den-lemma}.

\begin{proof}[Proof of Theorem \ref{main-limsup}]
    We again consider $f = \Lambda \cdot \Ind_{[1,x]},$ and fix $Q := x^{1/3 - \eps/6}$ in order to define $\A$ as in (\ref{def-A}). By Theorem \ref{initial-prop}, we then have \begin{align}
        \frac{1}{x} \sum_{-y < n\le x} \left|\sum_{\substack{n< m\le n+y \\ 1 \le m \le x}} \Lambda(m) \e(\alpha m)\right|^2 
        &\gg 
        \frac{y^2}{x^2} \sum_{\frac{a}{q}\in\A} x^2\frac{\mu^2(q)}{\phi^2(q)} - 2x \frac{\mu^2(q)}{\phi(q)} \left|\psi(x; a/q) - x\frac{\mu(q)}{\phi(q)}\right| \notag
        \\&\gg y^2 \sum_{\frac{a}{q} \in\A} \frac{\mu^2(q)}{\phi^2(q)} + O\left(\frac{y^2}{x}\sum_{\frac{a}{q}\in\A} \frac{(xq)^{1/2}(\log x)^{2}}{\phi(q)}\right) \label{step17}
    \end{align}
    where the error term is bounded by Lemma \ref{grh}.

    Now, since we assumed $\eps' s^2 \le y \le s^2/12$ for some rational approximation $\left|\alpha - \frac{u}{s}\right| \le \frac{1}{s^2},$ we have by Lemma~\ref{squarefree-den-lemma} for all $y^{3/5 + \eps/6} \le Q' \le Q/2,$ $$\sum_{\substack{\frac{a}{q}\in\A \\ Q'< q \le 2Q'}} \frac{\mu^2(q)}{\phi^2(q)} \ge \frac{1}{4Q'^2} \#\left\{\frac{a}{q} : Q' < q \le 2Q', \mu^2(q) = 1, \left|\alpha - \frac{a}{q}\right| \le \frac{2}{s^2}\right\} \gg \frac{1}{s^2} \asymp \frac{1}{y},$$
    so that summing on dyadic intervals (as $y^{3/5 + \eps/6} \le Qx^{-\eps/6}$ when $y \le x^{5/9-\eps}$) we obtain the main term of~(\ref{step17}) satisfies 

    $$y^2 \sum_{\frac{a}{q} \in\A} \frac{\mu^2(q)}{\phi^2(q)} \gg y\log x.$$

    Moreover, since the elements of $\A$ are rationals of denominator $\le Q,$ the intervals $\left[\frac{a}{q} - \frac{1}{2qQ}, \frac{a}{q} + \frac{1}{2qQ}\right]$ do not overlap so that $$\sum_{\frac{a}{q}\in\A} \frac{1}{qQ} \le \frac{1}{3y}.$$ Injecting this into the error term of (\ref{step17}), we find that the error term is $\ll y x^{-1/2+\eps/6} Q^{3/2}$ which is smaller than the main term with our choice of $Q := x^{1/3 - \eps/6}.$
\end{proof}

\subsection{The case of the number of divisors}

We now prove Theorem \ref{uniform-tau}. The reason why we are able to reach $y = x^{1-\eps}$ in this setting is because the exponential sum at rationals can be very well understood, as shown by the lemma below.

\begin{lemma}\label{tau-rational}
    For $1\le q \le x^{1/2}$ and $a$ with $(a,q) = 1,$ we have 
    $$\sum_{n\le x} \tau(n) \e\left(\frac{a}{q}n\right) = \frac{x}{q}\Big(\log(x/q^2) + 2\gamma - 1\Big) + O\left(x^{1/2}(1+\log q)\right).$$
\end{lemma}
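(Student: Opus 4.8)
The plan is to use the Dirichlet hyperbola method, exploiting that $\tau = \mathbf{1} * \mathbf{1}$, and then split each resulting inner sum of $\e(adn/q)$ over an arithmetic-progression-like range according to the residue of $d$ modulo $q$. First I would write
\[
\sum_{n\le x} \tau(n)\,\e\!\left(\tfrac{a}{q}n\right) = \sum_{de\le x} \e\!\left(\tfrac{a}{q}de\right) = 2\sum_{d\le \sqrt{x}} \sum_{d< e\le x/d} \e\!\left(\tfrac{a}{q}de\right) + \sum_{d\le\sqrt x}\e\!\left(\tfrac{a}{q}d^2\right),
\]
where the last term is trivially $O(\sqrt x)$. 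For the main double sum I would fix $d$ and evaluate the geometric sum $\sum_{d<e\le x/d}\e(ade/q)$ in $e$: if $q\mid d$ it equals $x/d - d + O(1)$, contributing the main term; if $q\nmid d$ it is $O\!\big(\min(x/d,\|ad/q\|^{-1})\big)$ where $\|\cdot\|$ denotes distance to the nearest integer.

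Next I would handle the off-diagonal terms $q\nmid d$: summing $\min(x/d,\|ad/q\|^{-1})$ over $d\le\sqrt x$. Breaking the range of $d$ into blocks of length $q$ and using that as $d$ runs over a complete residue system mod $q$ the quantity $ad \bmod q$ also runs over a complete residue system (since $(a,q)=1$), the sum over one block of $\|ad/q\|^{-1}$ is $\ll q\log q$, and there are $\ll \sqrt x/q + 1$ blocks, giving a contribution $\ll \sqrt x\log q + q\log q \ll \sqrt x(1+\log q)$ using $q\le\sqrt x$. (One must also check the terms with small $d$, where the $x/d$ cap is the binding one, contribute at most $O(\sqrt x)$.) This is the step I expect to be the main obstacle, since one needs to combine the two different bounds $x/d$ and $\|ad/q\|^{-1}$ carefully across the dyadic/arithmetic structure without losing more than a single logarithm in $q$.

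Finally I would assemble the main term from the diagonal $q\mid d$. Writing $d = q\delta$ with $\delta\le \sqrt x/q$, the contribution is
\[
2\sum_{\delta\le \sqrt{x}/q}\left(\frac{x}{q\delta} - q\delta + O(1)\right) = \frac{2x}{q}\sum_{\delta\le\sqrt x/q}\frac1\delta - 2q\sum_{\delta\le\sqrt x/q}\delta + O\!\left(\frac{\sqrt x}{q}\right).
\]
Using $\sum_{\delta\le T}1/\delta = \log T + \gamma + O(1/T)$ with $T=\sqrt x/q$ gives $\frac{2x}{q}\big(\tfrac12\log(x/q^2)+\gamma\big) + O(\sqrt x/q)$, and $\sum_{\delta\le T}\delta = \tfrac12 T^2 + O(T)$ gives $-2q\cdot\tfrac12(x/q^2) + O(\sqrt x) = -x/q + O(\sqrt x)$. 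Adding these yields $\frac{x}{q}\big(\log(x/q^2) + 2\gamma - 1\big) + O(\sqrt x)$, and collecting this with the $O(\sqrt x(1+\log q))$ from the off-diagonal terms and the $O(\sqrt x)$ diagonal term completes the proof. Throughout one should double-check that the ranges $e>d$ in the hyperbola method are respected (so that the $-d$ and $-q\delta$ correction terms are accounted for) and that $q\le x^{1/2}$ is used to absorb $q\log q$ into $\sqrt x(1+\log q)$.
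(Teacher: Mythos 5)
Your proposal is correct and follows essentially the same route as the paper's proof: the hyperbola method, classifying $d$ by the residue $ad \bmod q$, extracting the main term from $q\mid d$, and bounding the off-diagonal contribution by $O(q/|b|)$ per residue class to get $O(x^{1/2}\log q)$. The only cosmetic difference is that the paper bounds the inner geometric sum directly by $O(q/|b|)$ without invoking the $\min(x/d,\|ad/q\|^{-1})$ refinement, so the extra care you flag about small $d$ is not actually needed.
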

\begin{proof}
    Writing $\tau(n) = \sum_{uv = n} 1,$ we have 
    \begin{align}
        \sum_{n\le x} \tau(n) \e\left(\frac{a}{q}n\right) &=
        \sum_{mn\le x} \e\left(\frac{a}{q}mn\right) \notag \\&=
        2 \sum_{m\le x^{1/2}} \sum_{m < n \le x/m} \e\left(\frac{am}{q}n\right) + \sum_{m\le x^{1/2}} \e\left(\frac{a}{q}m^2\right)
        \label{step15} \\&:= 2T + E. \notag
    \end{align}

    The sum over squares $E$ is trivially bounded by $O(x^{1/2})$ (better bounds exist, but this one is sufficient for our purpose). Now, if $am \equiv b \Mod q$ with $-q/2 < b \le q/2,$ the inner sum $\sum_{m < n \le x/m} \e\left(\frac{am}{q}n\right)$ is $O(q/|b|)$ unless $b = 0$ in which case it is $x/m - m + O(1).$ Thus, since $a$ is coprime with $q,$ noting $\Bar{a}$ its inverse modulo $q$ we have

    \begin{align*}
        T &= \sum_{\substack{m\le x^{1/2} \\ q|m}} \left(\frac{x}{m}-m+O(1)\right) + \sum_{\substack{-q/2 < b \le q/2 \\ b\ne 0}} \sum_{\substack{m\le x^{1/2} \\ m \equiv \Bar{a}b \Mod q}} O\left(\frac{q}{|b|}\right)
        \\&= \frac{x}{q}\Big(\log (x^{1/2}/q) + \gamma + O(q/x^{1/2})\Big) - \frac{q}{2}\left(\frac{x^{1/2}}{q} + O(1)\right)^2 + O(x^{1/2}/q) + O\left(q \log q \cdot \frac{x^{1/2}}{q}\right)
        \\&= \frac{x}{q}\Big(\log(x^{1/2}/q) + \gamma - 1/2\Big) + O\left(x^{1/2}(1+\log q)\right)
    \end{align*}
    when $1\le q\le x^{1/2}.$ Combining with (\ref{step15}) gives the desired result.
\end{proof}

We can now prove Theorem \ref{uniform-tau} using Theorem \ref{initial-prop} and Lemma \ref{ratapprox}.

\begin{proof}[Proof of Theorem \ref{uniform-tau}]
    We fix $Q := x^{1/2}\frac{\log\log x}{C_1\log x}$ with $C_1$ large enough so that Lemma \ref{tau-rational} yields $$\sum_{n\le x} \tau(n) \e\left(\frac{a}{q}n\right) \asymp \frac{x}{q}\log(x/q^2)$$ for $q \le Q.$ We then have by Theorem \ref{initial-prop} applied to $f = \tau\cdot \Ind_{[1,x]}$ and Lemma \ref{tau-rational}, 
    \begin{equation}\label{step14}
        \frac{1}{x} \sum_{-y < n\le x} \left|\sum_{\substack{n< m\le n+y \\ 1 \le m \le x}} \tau(m) \e(\alpha m)\right|^2 \gg \frac{y^2}{x^2} \sum_{\frac{a}{q}\in\A} \left(\frac{x \log(x/q^2)}{q}\right)^2,
    \end{equation}
    where $\A$ is defined by (\ref{def-A}).

    We now cut the sum into dyadic intervals based on the value of $q.$ Since we have a rational approximation $\left|\alpha - \frac{u}{s}\right| \le \frac{1}{s^2}$ with $\frac{2}{s^2} \le \frac{1}{6y}$ and $s^2 \asymp y,$ we have by Lemma \ref{ratapprox} an absolute constant $C_2$ such that for $C_2 y^{1/2} \le Q' \le Q/2,$ we have 
    \begin{align*}
        \sum_{\substack{\frac{a}{q}\in\A \\ Q' < q \le 2Q'}} \left(\frac{x \log(x/q^2)}{q}\right)^2 &\gg \left(\frac{x \log(x/Q'^2)}{Q'}\right)^2 \#\left\{\frac{a}{q} : Q' < q \le 2Q', (a,q) = 1, \left|\alpha - \frac{a}{q}\right| \le \frac{1}{6y}\right\} \\&\gg \frac{x^2}{y} \big(\log(x/Q'^2)\big)^2.
    \end{align*}
    The conclusion then follows from (\ref{step14}) when $y \le Y := (Q/4C_2)^2$ as the number of dyadic intervals verifying $C_2 y^{1/2} \le Q' \le Q/2$ is then $\gg \log(Y/y).$
\end{proof}

\subsection{Proofs of Corollaries}

We now prove Corollary \ref{main-cor} from Theorem \ref{main-uniform} and Corollary \ref{grh-cor} from Theorem \ref{main-limsup}.

\begin{proof}[Proof of Corollary \ref{main-cor}]
    Fix $y = x^{1/3 - \eps}$ and let $\alpha\in\R.$ We have for all $-y < n \le x,$
    \begin{equation}\label{L2_le_sup}
        \left|\sum_{\substack{n< m\le n+y \\ 1 \le m \le x}} \Lambda(m) \e(\alpha m)\right| = \big|\psi\big(\min\{n+y,x\}; \alpha\big) - \psi(n; \alpha)\big| \le 2\sup_{n\le x} \big|\psi(n; \alpha) \big|.
    \end{equation}
    Taking the $L^2$ norm, we get \begin{equation}\label{step2}
        \sup_{n\le x} \big|\psi(n; \alpha) \big| \gg \left(\frac{1}{x} \sum_{-y < n\le x} \left|\sum_{\substack{n< m\le n+y \\ 1 \le m \le x}} \Lambda(m) \e(\alpha m)\right|^2\right)^{1/2} \gg (y\log x)^{1/2} \gg x^{1/6-\eps}
    \end{equation} uniformly in $\alpha$ by Theorem \ref{main-uniform} as desired. 

    To move on to $\pi(n,\alpha),$ we note that we have for all $n\ge y^2,$
    $$\sum_{\substack{n< m\le n+y \\ 1\le m\le x}} \Lambda(m) \e(\alpha m) = \sum_{\substack{n< p\le n+y \\ 2\le p\le x}} (\log p) \e(\alpha p) + O\big((\log n)^2\big) = (\log n)\sum_{\substack{n< p\le n+y \\ 2\le p\le x}} \e(\alpha p) + O\big((\log n)^2\big)$$ since there is at most one $p^k$ in $(n, n+y]$ for each $k\ge 2,$ and $\log p = \log n + O(y/n) = \log n + O(1/y)$ for $n<p\le n+y.$

    Thus, \begin{align*}
        \sup_{n\le x} \big|\pi(n;\alpha)\big| &\ge \frac{1}{2} \sup_{y^2\le n\le x} \left|\sum_{\substack{n< p\le n+y \\ 1\le p\le x}} \e(\alpha p)\right|
        \\&\ge \frac{1}{2\log x} \sup_{y^2\le n\le x} \left|\sum_{\substack{n< m\le n+y \\ 1\le m\le x}} \Lambda(m) \e(\alpha m)\right| + O\left(\log x\right)
        \\&\ge \frac{1}{2\log x} \left(\frac{1}{x} \sum_{y^2 < n\le x} \left|\sum_{\substack{n< m\le n+y \\ 1 \le m \le x}} \Lambda(m) \e(\alpha m)\right|^2\right)^{1/2} + O\left(\log x\right)
        \\&\gg \frac{1}{\log x}\big(y\log x + O(y^4/x)\big)^{1/2} + O\left(\log x\right)
        \\&\gg x^{1/6-\eps}
    \end{align*}
    by Theorem \ref{main-uniform} with $y = x^{1/3-\eps}.$
\end{proof}

We now prove Corollary \ref{grh-cor}, where we assumed $\alpha$ is irrational and GRH in order to use Theorem \ref{main-limsup}.

\begin{proof}[Proof of Corollary \ref{grh-cor}]
    Since $\alpha$ is irrational, we can find arbitrarily large $s$ such that $\left|\alpha - \frac{u}{s}\right| \le \frac{1}{s^2}$ for some irreducible fraction $\frac{u}{s}.$ For such an $s,$ fix $y = s^2/12$ and $x = y^{9/5+\eps}.$ It then follows from (\ref{L2_le_sup}) and Theorem \ref{main-limsup} that for such $x,$ we have $n \le x$ such that $$\big|\psi(n;\alpha)\big| \gg (y\log x)^{1/2} \gg n^{5/18 - \eps}.$$ 
    Moreover, we must have $$n \ge \big|\psi(n;\alpha)\big| \gg (y\log x)^{1/2} \gg s,$$ which ensures $n$ grows to infinity with $s.$
\end{proof}

\section*{Acknowledgements}

I would like to thank my advisor R\'egis de la Bret\`eche (Universit\'e Paris Cit\'e) for his guidance. 
I would also like to thank Olivier Ramar\'e (Aix-Marseille Universit\'e) for presenting me this problem and Sary Drappeau (Universit\'e Clermont Auvergne) for discussing parts of the problem with me. Finally, I would like to thank Mikhail Gabdullin (University of Illinois) for the remark on the Carleson--Hunt theorem.

\printbibliography

@article{vaughan,
    author = {Vaughan, R. C.},
    title = {The $L^1$ mean of Exponential Sums over Primes},
    journal = {Bulletin of the London Mathematical Society},
    volume = {20},
    number = {2},
    pages = {121-123},
    year = {1988},
}

@book{davenport,
    author = {Davenport, H.},
    title = {Multiplicative Number Theory},
    edition = {3},
    series = {Graduate Texts in Mathematics},
    Cpublisher = {American Mathematical Society},
    year = {2000}
}

@book{vinogradov,
    author = {Vinogradov, I. M.},
    title = {The Method of Trigonometrical Sums in the Theory of Numbers},
    publisher = {Interscience},
    language = {translated from Russian},
    year = {1954}
}

@article{liuzhan,
    author = {J. Liu and T. Zhan},
    title = {The ternary Goldbach problem in arithmetic progressions},
    journal = {Acta Arithmetica},
    year = {1997},
    volume = {82},
    pages = {197-227}
}

@article{mv, 
    title={The large sieve}, 
    volume={20}, 
    number={2}, 
    journal={Mathematika}, 
    author={Montgomery, H. L. and Vaughan, R. C.}, 
    year={1973}, 
    pages={119–134}
}

@article{hb,
    author = {Heath-Brown, D. R.},
    title = {Diophantine approximation with square-free numbers},
    journal = {Mathematische Zeitschrift},
    year = {1984},
    volume = {187},
    pages = {335-344}
}

@book{mv-book, 
    place={Cambridge}, 
    series={Cambridge Studies in Advanced Mathematics}, 
    title={Multiplicative Number Theory I: Classical Theory}, 
    publisher={Cambridge University Press}, 
    author={Montgomery, H. L. and Vaughan, R. C.}, 
    year={2006}, 
    collection={Cambridge Studies in Advanced Mathematics}
}

@article{tyrrell,
      title={Bounded Exponential Sums with Multiplicative Coefficients}, 
      author={P. Bazin and I. Pylaiev and F. Tyrrell},
      year={2026},
      volume = "72",
    journal = "Mathematika",
    number = "4",
}

@misc{pi_k,
      title={A Bombieri-Vinogradov theorem for exponential sums over products of k primes}, 
      author={P. Bazin},
      year={2026},
      eprint={2607.15137},
      archivePrefix={arXiv},
      primaryClass={math.NT},
      url={https://arxiv.org/abs/2607.15137}, 
}

@misc{goldston,
      title={On the $L^1$ norm of an exponential sum involving the divisor function}, 
      author={D. A. Goldston and M. Pandey},
      year={2017},
      eprint={1704.05953},
      archivePrefix={arXiv},
      primaryClass={math.NT}
}

@article{hunt,
    author = "Hunt, R. A.",
    title = "On the convergence of Fourier series",
    journal = "Orthogonal Expansions and their Continuous Analogues",
    year = "1968",
    pages = "235-255",
    series = "Proc. Conf. Edwardsville, IL, 1967",
    publisher = "Southern Illinois Univ. Press, Carbondale, IL"
}

\end{document}